\documentclass[a4paper]{article}
\usepackage[]{geometry}
\usepackage[english]{babel}
\usepackage[utf8x]{inputenc}
\usepackage{amsmath,amssymb,amsthm,amsfonts,amssymb,scrextend,color}
\usepackage{graphicx}
\usepackage[colorinlistoftodos]{todonotes}
\usepackage{array}
\usepackage{tabu}
\newtheorem*{definition*}{Definition}
\newtheorem*{notation*}{Notation}
\newtheorem{theorem}{Theorem}[subsection]
\newtheorem*{theorem*}{Theorem}
\newtheorem*{claim*}{Claim}
\newtheorem*{remark*}{Remark}

\numberwithin{equation}{section}

\title{Evaluation of Harmonic Sums with Integrals}
\author{Vivek Kaushik and Daniele Ritelli}

\begin{document}

\maketitle

\begin{abstract}
We consider the sums $S(k)=\sum_{n=0}^{\infty}\frac{(-1)^{nk}}{(2n+1)^k}$ and $\zeta(2k)=\sum_{n=1}^{\infty}\frac{1}{n^{2k}}$  with $k$ being a positive integer. We evaluate these sums with multiple integration, a modern technique. First, we start with three different double integrals that have been previously used in the literature to show $S(2)=\pi^2/8,$ which implies Euler's identity $\zeta(2)=\pi^2/6.$ Then, we generalize each integral in order to find the considered sums. The $k$ dimensional analogue of the first integral is the density function of the quotient of $k$ independent, nonnegative Cauchy random variables. In seeking this function, we encounter a special logarithmic integral that we can directly relate to $S(k).$ The $k$ dimensional analogue of the second integral, upon a change of variables, is the volume of a convex polytope, which can be expressed as a probability involving certain pairwise sums of $k$ independent uniform random variables. We use combinatorial arguments to find the volume, which in turn gives new closed formulas for $S(k)$ and $\zeta(2k).$ The $k$ dimensional analogue of the last integral, upon another change of variables, is an integral of the joint density function of $k$ Cauchy random variables over a hyperbolic polytope. This integral can be expressed as a probability involving certain pairwise products of these random variables, and it is equal to the probability from the second generalization. Thus, we specifically highlight the similarities in the combinatorial arguments between the second and third generalizations.
\end{abstract}

\section{Introduction}
One of the most celebrated problems in Classical Analysis is the \textit{Basel Problem}, which is to evaluate the infinite series
\begin{equation}
\zeta(2)=\sum_{n=1}^\infty\frac{1}{n^2}.
\label{euler}
\end{equation}
Pietro Mengoli initially posed this problem in 1644, and Euler \cite{LE} was the first to give the correct answer $\pi^2/6.$ Since Euler's time, however, more solutions to the  problem have appeared in the literature. For example, Kalman \cite{DK} records ones with techniques from different areas such as Fourier Analysis and complex variables. 

In this article, we generalize solutions involving multiple integration to the closely related problem of finding the sums:
\begin{align}\label{S(k)}
S(k) & = \sum_{n=0}^{\infty} \frac{(-1)^{nk}}{(2n+1)^k}, \quad k \in \mathbb{N}, \\  \label{zeta(2k)}
\zeta(2k) & = \sum_{n=1}^{\infty} \frac{1}{n^{2k}}, \quad k \in \mathbb{N},
\end{align}
with the latter sum being the Riemann Zeta Function evaluated at the positive even integers. Apostol \cite{A} pioneered this approach on $\zeta(2).$ He evaluates 
\begin{equation} 
\int_{0}^{1}\int_{0}^{1} \frac{1}{1-xy} \ dx \ dy
\label{Apostol}
\end{equation}
in two ways: first, by converting the integrand into a geometric series and exchanging summation and integration to obtain $\zeta(2),$ and then using the linear change of variables
\[x= \frac{u+v}{\sqrt{2}}, \quad y= \frac{u-v}{\sqrt{2}}\] to obtain $\pi^2/{6}$ upon evaluating arctangent integrals.

We focus on three different double integrals as our starting point:
\begin{equation} \label{Double Integral 1}
D_1=\int_{0}^{\infty} \int_{0}^{\infty} \frac{y}{(x^2y^2+1)(y^2+1)} \ dx \ dy,
\end{equation}
whose variations appear in \cite{BFY,JH,NL,LP,DR},
\begin{equation} \label{Double Integral 2}
D_2=\int_{0}^{1} \int_{0}^{1} \frac{1}{1-x^2y^2} \ dx \ dy,
\end{equation} 
which appears in \cite{BCK,NE}, and 
\begin{equation} \label{Double Integral 3}
D_3=\int_{0}^{1} \int_{0}^{1} \frac{1}{\sqrt{xy}(1-xy)} \ dx \ dy,
\end{equation}
which appears in \cite[p. 9]{ZK}.
Each integral is used to show 
\begin{equation}\label{pi^2/8}
S(2)=\sum_{n=0}^\infty\frac{1}{(2n+1)^2}= \frac{\pi^2}{8}, 
\end{equation} from which we can algebraically obtain $\zeta(2)=\pi^2/6.$

First, $D_1$ evaluates to $\pi^2/4$, but upon reversing the order of integration, it becomes
\begin{equation}\label{leibniz}
\int_0^\infty\frac{\ln(x)}{x^2-1}\ dx,
\end{equation}
which we show is $2S(2).$  We generalize $D_1,$ considering the density function of the quotient of $k$ Cauchy random variables expressed as a multiple integral. Our approach differs from that of Bourgade, Fujita and Yor \cite{BFY}, who also use these same random variables. We evaluate this multiple integral by repeatedly reversing the order of integration and using partial fractions several times. The result is we obtain a logarithmic integral generalizing \eqref{leibniz}, which we can relate directly back to $S(k).$ We then modify our approach to find special closed forms to the bilateral alternating series
\begin{equation} \label{Bilateral Hurwitz Zeta Series}
S(k,a) = \sum_{n=-\infty}^{\infty} \frac{(-1)^{nk}}{(an+1)^k}, \quad a>1, \quad  \frac{1}{a} \notin \mathbb{N}, \quad k \in \mathbb{N},  
\end{equation}
of which the special case $k=2$ is examined in \cite{JC}. 
 
Next, $D_2$, similar to \eqref{Apostol}, can be evaluated in two ways. First, we convert the integrand into a geometric series and exchange summation and integration to obtain $S(2).$ Then, using Calabi's  trigonometric change of variables \cite{BCK} (the hyperbolic version provided in \cite{D'AK,LI,silagadze2010,silagadze2012})
\begin{equation} \label{Calabi COV}
x=\frac{\sin(u)}{\cos(v)},\quad  y=\frac{\sin(v)}{\cos(u)},
\end{equation}
whose Jacobian determinant is $\left|\frac{\partial(x,y)}{\partial(u,v)} \right|=1-x^2y^2,$ 
we see $D_2$ is the area of a right isosceles triangle with the shorter legs each of length $\pi/2.$ Hence,  $D_2=\pi^2/8.$ The generalized version of this approach leads us to a convex polytope 
\begin{equation}\label{Polytope}
\Delta^k= \lbrace (u_1, \dots, u_k) \in \mathbb{R}^k : u_{i}+u_{i+1}<1, \ u_i>0, 1 \leq i \leq k \rbrace,
\end{equation} 
in which we use cyclical indexing mod $k:$ $u_{k+1}:=u_{1}.$  The volume of $\Delta^k,$ which is equal to $(2/\pi)^k S(k),$ has already been computed in three different ways: Beukers, Calabi, and Kolk \cite{BCK} dissect $\Delta^k$ into pairwise disjoint congruent pyramids, Elkies \cite{NE} and Silagadze \cite{silagadze2012} both perform spectral analysis on its characteristic function, and Stanley \cite{RS} uses properties of alternating permutations. We give another approach to the volume computation, viewing it as the probability that $k$ independent uniform random variables on $(0,1)$ have cyclically pairwise consecutive sums less than $1.$ We use combinatorial arguments to evaluate the probability. Our approach leads to new and interesting closed formulas of $S(k)$ and $\zeta(2k)$ that do not invoke the traditional Bernoulli and Eulerian numbers.

Finally, upon a substitution $x=\sqrt{u}, y=\sqrt{v}$ to $D_2$, we see that $D_3=4S(2).$ On the other hand, Zagier's and Kontsevich's change of variables \cite[p. 9]{ZK}
\begin{equation} \label{Zagier COV}
x= \frac{\xi^2(\eta^2+1)}{\xi^2+1}, \quad y= \frac{\eta^2(\xi^2+1)}{\eta^2+1},
\end{equation}
which has Jacobian Determinant $\left| \frac{\partial(x,y)}{\partial(\xi,\eta)} \right| = \frac{4\sqrt{xy}(1-xy)}{(\xi^2+1)(\eta^2+1)},$ transforms $D_3$ into 
\[\iint_{\substack{\xi \eta<1, \\ \xi, \eta>0}}\frac{4}{(\xi^2+1)(\eta^2+1)}\ d \eta \ d \xi,\] whose value is $\pi^2/2.$ We generalize this approach, which leads us to integrating the joint density function of $k$ independent, nonnegative Cauchy random variables over a hyperbolic polytope:
\begin{equation}\label{Hypertope}
\mathbb{H}^k= \lbrace (\xi_1, \dots, \xi_k) \in \mathbb{R}^k : \xi_i\xi_{i+1}<1, \ \xi_i>0, 1 \leq i \leq k \rbrace,
\end{equation} 
in which we use cyclical indexing mod $k:$ $\xi_{k+1}:=\xi_1.$ This is the same as the probability that $k$ independent, nonnegative Cauchy random variables have cyclically pairwise consecutive products less than $1.$ Combinatorial analysis of this probability leads to the exact same closed formulas from the second approach, but we highlight the similarities between this analysis and that of our second approach. Hence, this approach, along with the second, induces two equivalent probabilistic viewpoints of $S(k)$ and $\zeta(2k).$  
 
Finally, it is worth noting that $\zeta(2k)$ can be obtained from $S(2k)$ by observing 
\[\zeta(2k)=\frac{1}{2^{2k}} \zeta(2k)+ S(2k),\]
which implies
\begin{equation} \label{Zeta(2k) in terms of S(2k)}
\zeta(2k)=\frac{2^{2k}}{2^{2k}-1} S(2k).
\end{equation}

\subsection{Some Concepts From Probability Theory}

We briefly recall some notions from probability theory that we use throughout this article.

Let $A_1, \dots , A_k$ be $k$ events. We denote $\text{Pr}(A_1,\dots, A_k)$ to be the probability that $A_1,\dots , A_k$ occur at the same time.   

Let $X_1,\dots ,X_k$ be $k$ continuous, nonnegative random variables. Their \textit{joint density function} $f_{X_1, \dots ,X_k}(x_1, \dots ,x_k)$ is a nonnegative function such that
\begin{equation}\label{Probability Axiom 1}
\int_{0}^{\infty} \dots\int_{0}^{\infty} f_{X_1, \dots ,X_k}(x_1, \dots ,x_k) \ dx_1 \dots dx_k = 1,
\end{equation}
and for all $a_1,\, b_1,\, \dots ,\,a_k,\, b_k \geq 0,$ we have 
\begin{equation} \label{Probability Axiom 2} 
\text{Pr}\left(a_1 \leq X_1 \leq b_1, \dots , a_k \leq X_k \leq b_k \right)  = \\  \int_{a_k}^{b_k} \dots\int_{a_1}^{b_1} f_{X_1, \dots ,X_k}(x_1, \dots ,x_k) \ dx_1 \dots dx_k.
\end{equation}
The \textit{joint cumulative distribution function} of $X_1, \dots ,X_k$ is the function 
\begin{equation} \label{Joint CDF}
F_{X_1, \dots, X_k}(x_1, \dots, x_k)=\text{Pr}(X_1 \leq x_1, \dots , X_k \leq x_k).
\end{equation} 
In the case there is one random variable $X_1$, we simply refer to $f_{X_1}(x_1)$ as its \textit{density function} and $F_{X_1}(x_1)$ as its \textit{cumulative distribution function}. It follows from the Fundamental Theorem of Calculus that 
\begin{equation} \label{Differentiate CDF}
\frac{\partial^{k}}{\partial x_1 \dots \partial x_k} F_{X_1, \dots ,X_k} = f_{X_1, \dots, X_k},
\end{equation}
and in the case of only one variable $X_1,$ we have
$F'_{X_1}  = f_{X_1}.$

If $X_1, \dots, X_k$ are \textit{independent}, then we have 
\begin{equation}\label{Joint CDF Independence}
F_{X_1, \dots, X_k}(x_1, \dots x_k) = F_{X_1}(x_1) \dots F_{X_k}(x_k), 
\end{equation}
and from \eqref{Differentiate CDF}, we have
\begin{equation}\label{Joint PDF Independence}
f_{X_1, \dots, X_k}(x_1, \dots x_k) = f_{X_1}(x_1) \dots f_{X_k}(x_k).
\end{equation}

Let $X$ and $Y$ be independent, nonnegative random variables with density functions $f_X(x)$ and $f_Y(y),$ respectively. Their \textit{product} $Z=XY$ is a new random variable with density function
\begin{equation} \label{Product}
f_Z(z)= \int_{0}^{\infty}\frac{1}{x}f_Y\left(\frac{z}{x}\right)f_X(x) \ dx.
\end{equation}
If $X \neq 0,$ their \textit{quotient} $T=Y/X$ has density function
\begin{equation}\label{Quotient}
f_T(t)=\int_{0}^\infty x f_Y(tx)f_X(x) \ dx.
\end{equation} 
Random variable operations are studied in Springer's book \cite{springer1979}. 

Finally, the nonnegative random variable $X$ is said to be  \textit{Cauchy} if
\begin{equation}\label{Cauchy RV}
f_{X}(x) = \frac{2}{\pi} \frac{1}{x^2+1},
\end{equation}
and $X$ is said to be \textit{uniform} on $(a,b)$ with $a<b$ if
\begin{equation}\label{Uniform RV}
f_{X}(x) = \frac{1}{b-a}.
\end{equation} 
\section{Cauchy Random Variables} 
We first give a slightly modified version of the solution to the Basel Problem given by Pace \cite{LP}  (see also \cite{BFY}), which we will then generalize. Using the results of our generalization, we highlight two specific applications, in which we show $S(3)=\pi^3/32$ and $S(4)=\pi^4/96.$ The latter result gives a new proof of $\zeta(4)=\pi^4/90$ with multiple integration. Finally, we extend our generalization to find $S(k,a)$ and provide closed formulas for the cases $k=2$ and $3.$   

\subsection{Luigi Pace's Solution to the Basel Problem }

Let $X_1$ and $X_2$ be independent, nonnegative Cauchy random variables. Define
\[Z=X_1/X_2.\]
We compute the density function $f_{Z}(z).$  By \eqref{Quotient} and \eqref{Cauchy RV}, we have 
\begin{equation}\label{Joint Density 2 Cauchy RVS}
f_{Z}(z)  =\frac{4}{\pi^2} \int_{0}^{\infty} \frac{x_2}{(z^2x^2_2+1)(x^2_2+1)} \ dx_2. 
\end{equation} 

To evaluate \eqref{Joint Density 2 Cauchy RVS}, we use the partial fractions identity
\begin{equation}\label{Partial Fractions Identity}
\frac{x}{(x^2y^2+1)(x^2+1)} =\frac{xy^2}{(x^2y^2+1)(y^2-1)} -\frac{x}{(x^2+1)(y^2-1)}.
\end{equation}
Hence, 
\[f_{Z}(z) = \frac{4}{\pi^2} \frac{\ln(z)}{z^2-1}.\]
Using \eqref{Probability Axiom 1} and then rearranging terms, we have 
\begin{equation}\label{pi^2/4}
\int_{0}^{\infty} \frac{\ln(z)}{z^2-1} \ dz  = \frac{\pi^2}{4}.
\end{equation}

Following the argument of \cite{DR}, we write
\begin{align}
\int_{0}^{\infty} \frac{\ln(z)}{z^2-1} & =  \int_{0}^{1} \frac{\ln(z)}{z^2-1} \ dz +\int_{1}^{\infty} \frac{\ln(z)}{z^2-1} \ dz \label{split}  \\ 
& = 2 \int_{0}^{1} \frac{\ln(z)}{z^2-1} \ dz \label{combine},
\end{align}
in which \eqref{combine} follows from a substitution $z=1/u$ to the second term on the right hand side of \eqref{split}. We expand the integrand of \eqref{combine} into a geometric series 
\begin{equation}\label{Geometric Series}
\frac{\ln(z)}{z^2-1} =-\sum_{n=0}^{\infty}z^{2n} \ln(z).
\end{equation}
Putting the right hand side of \eqref{Geometric Series} in place of the integrand in \eqref{combine}, we have \eqref{combine} is equal to 
\begin{align}
2\int_{0}^{1} -\sum_{n=0}^{\infty}z^{2n} \ln(z) \ dz & = -2\sum_{n=0}^{\infty} \int_{0}^{1} z^{2n} \ln(z) \ dz \label{Monotone Convergence Thm} \\
& = 2 \sum_{n=0}^{\infty} \frac{1}{(2n+1)^2} \label{IBP},
\end{align}
by which \eqref{Monotone Convergence Thm} follows by the Monotone Convergence Theorem  (see \cite[p. 95-96]{CK}), and \eqref{IBP} follows upon integration by parts. Hence, we have  
\[S(2) = \frac{\pi^2}{8}.\]
Finally, using \eqref{Zeta(2k) in terms of S(2k)}, we obtain
\[\zeta(2) = \frac{4}{3} S(2) = \frac{\pi^2}{6}.\]
Thus, the Basel Problem is solved.
\subsection{Generalization for $S(k)$}
We now give the generalization of Pace's solution, as well as all the solutions presented in \cite{JH,NL,LP,DR}.

Let $X_1, \dots , X_k$ be $k$ independent, nonnegative Cauchy random variables with $k \geq 2.$ Define the quotient 
\[Z_k= X_1/\dots/ X_k.\]
We seek the density function $f_{Z_k}(z).$ 
\begin{theorem} \label{Joint Density Quotient k Cauchy RVs Thm}
The density function $f_{Z_k}(z)$ is
\begin{equation}\label{Joint Density Quotient k Cauchy RVs}
f_{Z_k}(z) = \frac{2^k}{\pi^k}\int_{0}^{\infty} \dots \int_{0}^{\infty} \frac{x_2 \dots x_k}{(z^2 x^2_2 \dots x^2_k + 1)(x^2_2+1) \dots (x^2_k+1)} \   dx_2 \dots dx_k.  
\end{equation}
\end{theorem}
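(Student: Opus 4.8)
The plan is to prove the formula by induction on $k \geq 2$, using only the quotient-density formula \eqref{Quotient}, the Cauchy density \eqref{Cauchy RV}, and Tonelli's theorem to rearrange the resulting iterated integrals of nonnegative functions. The key structural observation is that division associates to the left, so
\[
Z_k = (X_1/\dots/X_{k-1})/X_k = Z_{k-1}/X_k, \qquad Z_{k-1} := X_1/\dots/X_{k-1},
\]
equivalently $Z_k = X_1/(X_2 \cdots X_k)$, which is exactly what makes the single "combined" factor $z^2 x_2^2 \cdots x_k^2 + 1$ appear in \eqref{Joint Density Quotient k Cauchy RVs}. Since $Z_{k-1}$ is a measurable function of $X_1, \dots, X_{k-1}$ alone, it is a nonnegative random variable independent of $X_k$, so \eqref{Quotient} applies with $Y = Z_{k-1}$, $X = X_k$, $T = Z_k$, giving
\begin{equation*}
f_{Z_k}(z) = \int_0^\infty x_k\, f_{Z_{k-1}}(z x_k)\, f_{X_k}(x_k)\, dx_k.
\end{equation*}

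For the base case $k = 2$, the claimed formula is precisely \eqref{Joint Density 2 Cauchy RVS}, already obtained from \eqref{Quotient} and \eqref{Cauchy RV}. For the inductive step, I would substitute the induction hypothesis for $f_{Z_{k-1}}(z x_k)$ — replacing $z$ by $z x_k$ throughout the $(k-2)$-fold integral, so that the factor $(z^2 x_2^2 \cdots x_{k-1}^2 + 1)$ becomes $(z^2 x_k^2 x_2^2 \cdots x_{k-1}^2 + 1)$ — together with $f_{X_k}(x_k) = \tfrac{2}{\pi}(x_k^2+1)^{-1}$. Because every factor in sight is nonnegative, Tonelli's theorem lets me merge the outer $dx_k$-integral with the inner $dx_2 \cdots dx_{k-1}$-integral into a single $(k-1)$-fold integral; collecting the constants $\tfrac{2^{k-1}}{\pi^{k-1}} \cdot \tfrac{2}{\pi} = \tfrac{2^k}{\pi^k}$, the numerator factors $x_2 \cdots x_{k-1} \cdot x_k$, and the denominator factors $(x_2^2+1)\cdots(x_{k-1}^2+1)\cdot(x_k^2+1)$ yields exactly \eqref{Joint Density Quotient k Cauchy RVs}.

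The points that require genuine care are bookkeeping rather than conceptual: confirming the left-associative reading of $X_1/\dots/X_k$ so that the product $X_2\cdots X_k$ (hence the single quadratic factor in $z$) is what governs the computation; verifying that $Z_{k-1} > 0$ almost surely, so that \eqref{Quotient} is legitimately applicable at each stage; and justifying the interchange of the order of integration, which is immediate from Tonelli since the integrand is a product of nonnegative functions. An alternative route, for those who prefer to avoid induction, is to first compute the density of the product $X_2 \cdots X_k$ by iterating \eqref{Product} and then apply \eqref{Quotient} once to $Z_k = X_1/(X_2 \cdots X_k)$; this produces the same iterated integral but forces one to carry a more cumbersome product density, so I expect the inductive argument to be the cleaner presentation.
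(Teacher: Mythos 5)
Your proof is correct and follows essentially the same route as the paper: induction on $k$ with base case \eqref{Joint Density 2 Cauchy RVS}, decomposing $Z_k = Z_{k-1}/X_k$, applying the quotient formula \eqref{Quotient}, and substituting the inductive hypothesis. The only difference worth noting is that the paper additionally uses Tonelli's theorem to verify that the resulting expression integrates to $1$ over $z$ (thereby confirming it satisfies \eqref{Probability Axiom 1}), whereas you invoke Tonelli only to merge the iterated integrals into a single $(k-1)$-fold integral; this is a minor variation in emphasis, not in substance.
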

\begin{proof}
The case $k=2$ was already computed in \eqref{Joint Density 2 Cauchy RVS}. 

Let the statement hold for $k=m$ with $m>2.$ We show it must also hold for $k=m+1.$ Applying \eqref{Quotient} to $Z_{m+1}=Z_{m}/X_{m+1},$ we have 
\begin{align}
f_{Z_{m+1}}(z)  & = \frac{2}{\pi} \int_{0}^{\infty} x_{m+1}f_{Z_m}(zx_{m+1})f_{X_{m+1}}(x_{m+1}) \ dx_{m+1}\nonumber \\ 
& = \frac{2^{m+1}}{\pi^{m+1}} \int_{0}^{\infty} \dots \int_{0}^{\infty} \frac{x_2 \dots x_m x_{m+1}}{(z^2 x^2_2 \dots x^2_{m+1} +1)(x^2_2+1) \dots (x^2_{m+1}+1)} \   dx_2 \dots dx_{m+1} \label{(m+1)th Integral},
\end{align}
in which \eqref{(m+1)th Integral} is the result of the inductive hypothesis on $f_{Z_m}(z).$ 

Note that the integrand of \eqref{(m+1)th Integral} is nonnegative. Thus, Tonelli's Theorem allows us to reverse the order of integration on the integral  
\begin{equation} \label{Induction Integral}
\int_{0}^{\infty} f_{Z_{m+1}}(z) \ dz.
\end{equation}
Integrating \eqref{Induction Integral} with respect to $z$ first, and then with respect to each of the other $m$ variables, we see \eqref{Induction Integral} is equal to $1,$ satisfying \eqref{Probability Axiom 1}.   
\end{proof}

\begin{remark*} 
Theorem \ref{Joint Density Quotient k Cauchy RVs Thm} still holds if $Z_k$ is alternatively formulated by the quotient of $X_1$ and the product $X_2 \dots X_k.$ In this case, we would need to use \eqref{Product} and \eqref{Quotient}.
\end{remark*}

The crux of our generalization is evaluating \eqref{Joint Density Quotient k Cauchy RVs}, which involves reversing the order of integration several times and using a generalized partial fractions identity
\begin{equation} \label{Generalized PF}
\frac{x}{(x^2y^2-(-1)^s)(x^2+1)} = \frac{xy^2}{(x^2y^2-(-1)^s)(y^2-(-1)^s)}-\frac{x}{(x^2+1)(y^2-(-1)^s)}, \quad s \in \mathbb{N}.
\end{equation}
Along the way, we encounter integrals of the form
\begin{equation}\label{Cauchy PV 0}
\int_{0}^{\infty} \frac{\ln^{k}(z)}{z^2-(-1)^k} \ dz,
\end{equation}
which vanish, according to Gradshteyn and Rhyzik table entries \cite[Section 4.271: 7 and 9]{GH}. The end result is that we arrive at a logarithmic integral of the form
\begin{equation}\label{Logarithmic k Integral}
J_k = \int_{0}^{\infty} \frac{\ln^{k-1}(z)}{z^2-(-1)^k} \ dz.
\end{equation}
By splitting the region of integration in the same way as in \eqref{split} and making a substitution $z=1/u$ to the resulting second term, we find that
\begin{equation} \label{J_k}
J_k = 2 \int_{0}^{1} \frac{\ln^{k-1}(z)}{z^2-(-1)^k} \ dz.
\end{equation}
From entries \cite[Section 4.271: 6 and 10]{GH}, we have
\begin{equation} \label{Euler Bernoulli Numbers}
J_k = \begin{cases}
\dfrac{2^{2k}-2^k}{k}  \left( \dfrac{\pi}{2} \right)^{k} |B_{k}| & k \text{ even}\\[2mm]

\left( \dfrac{\pi}{2} \right)^{k} |E_{k-1}| & k \text{ odd} 
\end{cases}.
\end{equation}
Here, $B_m$ and $E_m$ denote the Bernoulli number and Eulerian number of order $m,$ respectively. These numbers satisfy  
\[\frac{x}{e^x-1} =\sum_{m=0}^{\infty}\frac{B_m}{m!}x^m, \quad  \frac{1}{\cosh(x)}=\sum_{m=0}^{\infty}\frac{E_m}{m!}x^m.\]
It is worth noting $B_m$ and $E_m$ can be alternatively defined through the generating functions of $\tan(x)$ and $\sec(x),$ respectively (see \cite{RS}).
Finally, we convert the integrand in \eqref{J_k} into a geometric series
\begin{equation} \label{S(k) General Geometric Series}
\frac{\ln^{k-1}(z)}{z^2-(-1)^k}= -  \sum_{n=0}^{\infty} \ln^{k-1}(z)  (-1)^{nk} z^{2n}.
\end{equation}
We put the series in \eqref{S(k) General Geometric Series} in place of the original integrand in \eqref{J_k}, then exchange summation and integration as permitted by the Monotone Convergence Theorem, and finally use repeated integration by parts to obtain the result
\begin{equation} \label{S(k) J_k relation}
S(k)=\frac{J_k}{2(k-1)!}. 
\end{equation}

\subsection{Evaluating $S(3)$ and $S(4)$}
We use our density function results to evaluate $S(3)$ and $S(4),$ with the latter giving $\zeta(4)=\pi^4/{90}.$ 
\begin{theorem} \label{S(3)}
The value of $S(3)$ is
\[S(3)=\sum_{n=0}^{\infty} \frac{(-1)^n}{(2n+1)^3}=\frac{\pi^3}{32}.\]
\end{theorem}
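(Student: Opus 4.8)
The plan is to read $S(3)$ off from the general machinery of Section 2.2 at $k=3$. By \eqref{S(k) J_k relation} we have $S(3) = J_3/(2\cdot 2!) = J_3/4$, and by \eqref{Logarithmic k Integral}, since $(-1)^3 = -1$,
\[ J_3 = \int_0^\infty \frac{\ln^2 z}{z^2+1}\, dz, \]
so everything reduces to proving $J_3 = \pi^3/8$. The quickest route is to quote \eqref{Euler Bernoulli Numbers}: $k=3$ is odd and the secant (Euler) number satisfies $|E_2| = 1$, giving $J_3 = (\pi/2)^3 |E_2| = \pi^3/8$ and hence $S(3) = \pi^3/32$. To keep the argument inside the multiple-integration framework and free of the Gradshteyn--Ryzhik tables, however, I would instead recover $J_3$ from the density function $f_{Z_3}$ via the normalization axiom \eqref{Probability Axiom 1}.

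Concretely, I would specialize Theorem \ref{Joint Density Quotient k Cauchy RVs Thm} to $k=3$, writing $f_{Z_3}(z)$ as the double integral in \eqref{Joint Density Quotient k Cauchy RVs} with constant $8/\pi^3$ in front. Integrating out $x_2$ first and using the partial fractions identity \eqref{Partial Fractions Identity} with $y = z x_3$, the two resulting elementary integrals each diverge at infinity, but their difference has the finite limit $\ln(z x_3)/(z^2 x_3^2 - 1)$, a function with only a removable singularity at $x_3 = 1/z$; this collapses $f_{Z_3}(z)$ to a single integral of $x_3\ln(z x_3)/[(z^2 x_3^2 - 1)(x_3^2 + 1)]$ over $(0,\infty)$. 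Writing $\ln(z x_3) = \ln z + \ln x_3$ and applying the generalized identity \eqref{Generalized PF} in its even-$s$ form (which puts the factor $z^2 - 1$ into a denominator), I would reduce this to $\ln z$ times an elementary integral, plus an integral whose $\ln x_3$-part, after the same splitting, either produces a second factor $\ln z$ or is an integral of the type \eqref{Cauchy PV 0} and hence vanishes. This leaves an explicit closed form for $f_{Z_3}(z)$.

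Finally I would impose \eqref{Probability Axiom 1}: the triple integrand is nonnegative, so Tonelli's theorem allows $\int_0^\infty f_{Z_3}(z)\, dz$ to be computed by integrating over $z$ first; after the cancellations, the surviving contribution is a multiple of $J_3$, and equating the total to $1$ forces $J_3 = \pi^3/8$, whence $S(3) = J_3/4 = \pi^3/32$. The main obstacle will be the bookkeeping in the middle step: the partial fractions splits produce integrals that are only conditionally convergent, so the computation must be carried out with a finite cutoff $R$ and a passage to the limit $R \to \infty$, and one has to track the exact rational coefficient of each power of $\ln z$ so that the $J_3$ term is neither lost nor double-counted when the final integration over $z$ is performed.
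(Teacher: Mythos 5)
Your plan is structurally identical to the paper's own proof: specialize Theorem~\ref{Joint Density Quotient k Cauchy RVs Thm} to $k=3$, integrate out $x_2$ by partial fractions to collapse $f_{Z_3}(z)$ to a single integral with numerator $x_3\ln(zx_3)$, split the logarithm, identify the $\ln z$ contribution as a multiple of $J_3$, show the $\ln x_3$ contribution vanishes via Tonelli and \eqref{Cauchy PV 0}, then normalize by \eqref{Probability Axiom 1} and finish with \eqref{S(k) J_k relation}. Your observations about conditional convergence and the need for cutoffs, and your alternative shortcut via \eqref{Euler Bernoulli Numbers} with $|E_2|=1$, are both correct.

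There is, however, one genuine sign error that would derail the computation if carried out as stated. You assert that the second partial-fraction step ``puts the factor $z^2-1$ into a denominator,'' reading \eqref{Generalized PF} literally. But the partial-fraction decomposition \emph{flips} the sign with each application: starting from $z^2x_2^2x_3^2+1$, integrating out $x_2$ yields $(zx_3)^2-1$ (which you have right, and matches \eqref{Partial Fractions Identity}), and integrating out $x_3$ from $z^2x_3^2-1$ must then yield $z^2+1$, not $z^2-1$. This matters: the target is $J_3=\int_0^\infty \ln^2(z)/(z^2+1)\,dz=\pi^3/8$, whereas $\int_0^\infty \ln^2(z)/(z^2-1)\,dz$ is exactly the vanishing integral of type \eqref{Cauchy PV 0} with exponent $k=2$, so your setup would collapse to $1=0+\cdots$ and produce no equation for $J_3$ at all. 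The source of the confusion is that \eqref{Generalized PF} as printed in the paper is not actually an algebraic identity---cross-multiplying its right-hand side gives $x\bigl(y^2+(-1)^s\bigr)$ in the numerator, so the factor appearing in the two new denominators should be $y^2+(-1)^s$, not $y^2-(-1)^s$. With that correction the two stated partial-fraction identities agree and the paper's own intermediate expression $\frac{8}{\pi^3}\frac{\ln^2(z)}{z^2+1}$ drops out as required. So: same proof, but fix the sign in the second partial-fractions step (and don't trust \eqref{Generalized PF} verbatim).
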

\begin{proof}
Let $X_1, X_2,$ and $X_3$ be independent, nonnegative Cauchy random variables, and let $Z_3$ be their quotient. Then, by Theorem \ref{Joint Density Quotient k Cauchy RVs Thm}, we have the density function 
\[f_{Z_3}(z)= \frac{8}{\pi^3} \int_{0}^{\infty} \int_{0}^{\infty}  \frac{x_2x_3}{(z^2x_2^2x_3^2+1)(x_2^2+1)(x_3^2+1)}  \ dx_2 \ dx_3. \]

Using the partial fractions identity in \eqref{Generalized PF}, we have 
\begin{align}
f_{Z_3}(z) & =\frac{8}{\pi^3} \int_{0}^{\infty}  \frac{\ln(zx_3)}{(z^2 x^2_3-1)(x^2_3+1)} \ dx_3 \nonumber\\
& =\frac{8}{\pi^3} \int_{0}^{\infty} \frac{x_3\ln(z)}{(z^2x^2_3-1)(x^2_3+1)} + \frac{8}{\pi^3} \int_{0}^{\infty} \frac{x_3 \ln(x_3)}{(z^2x^2_3-1)(x^2_3+1)}  dx_3 \label{S(3) split}.
\end{align}
Using \eqref{Generalized PF} again on the first term of \eqref{S(3) split}, we see 
\begin{equation}\label{S(3) combine}
f_{Z_3}(z)= \frac{8}{\pi^3} \frac{\ln^2(z)}{z^2+1} +\frac{8}{\pi^3} \int_{0}^{\infty} \frac{x_3\ln(x_3)}{(z^2x^2_3-1)(x^2_3+1)} \ dx_3.
\end{equation}

Now, by \eqref{Probability Axiom 1}, we have
\begin{equation} \label{Z_3}
1= \frac{8}{\pi^3} J_3 + \frac{8}{\pi^3} \int_{0}^{\infty} \int_{0}^{\infty} \frac{x_3\ln(x_3)}{(z^2x^2_3-1)(x^2_3+1)} \ dx_3 \ dz.
\end{equation}
Reversing the order of integration on the second term in \eqref{Z_3}, we encounter an integral of the form examined in \eqref{Cauchy PV 0}, so this term vanishes. Thus, rearranging the terms in \eqref{Z_3} gives
\[J_3 = \frac{\pi^3}{8}.\]
Applying \eqref{S(k) J_k relation}, we find
\[
S(3) = \sum_{n=0}^{\infty}\frac{(-1)^n}{(2n+1)^3}=\frac{1}{2(2!)} \frac{\pi^3}{8} = \frac{\pi^3}{32}.
\]
\end{proof}

\begin{remark*}
Our result for $J_3$ agrees with \eqref{Euler Bernoulli Numbers}, since $|E_2|=1.$
\end{remark*}

We now find $S(4)$ and $\zeta(4).$

\begin{theorem} \label{S(4) Zeta(4)}
The values of $S(4)$ and $\zeta(4)$  are
\begin{align*}
S(4) & =\sum_{n=0}^{\infty} \frac{1}{(2n+1)^4} = \frac{\pi^4}{96} \\
\zeta(4) & = \sum_{n=1}^{\infty} \frac{1}{n^4} = \frac{\pi^4}{90}.
\end{align*}

\end{theorem}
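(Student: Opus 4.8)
The plan is to reproduce, with one more variable, the argument of Theorem~\ref{S(3)}. Let $X_1,X_2,X_3,X_4$ be independent nonnegative Cauchy random variables and $Z_4=X_1/X_2/X_3/X_4$. Theorem~\ref{Joint Density Quotient k Cauchy RVs Thm} presents $f_{Z_4}(z)$ as a triple integral; I would reduce it, one variable at a time, to an elementary function of $z$, and then feed it into the normalization \eqref{Probability Axiom 1} to solve for
\[J_4=\int_0^\infty\frac{\ln^3 z}{z^2-1}\,dz,\]
whence $S(4)=J_4/(2\cdot 3!)$ by \eqref{S(k) J_k relation} and $\zeta(4)=\tfrac{16}{15}S(4)$ by \eqref{Zeta(2k) in terms of S(2k)} with $k=2$. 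Every single-variable integration in the reduction is a partial-fraction computation built on \eqref{Generalized PF}; the two elementary facts used repeatedly are $\int_0^\infty\frac{x\,dx}{(ax^2+1)(x^2+1)}=\frac{\ln a}{2(a-1)}$ and its principal-value analogue $\int_0^\infty\frac{x\,dx}{(b^2x^2-1)(x^2+1)}=\frac{\ln b}{b^2+1}$.

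Integrating out $x_2$ first gives
\[f_{Z_4}(z)=\frac{16}{\pi^4}\int_0^\infty\!\!\int_0^\infty\frac{x_3x_4\,\ln(zx_3x_4)}{(z^2x_3^2x_4^2-1)(x_3^2+1)(x_4^2+1)}\,dx_3\,dx_4.\]
I would then write $\ln(zx_3x_4)=\ln z+\ln x_3+\ln x_4$ and treat the three resulting integrals separately. This split of the logarithm must be read in the principal-value sense: each summand separately has a simple pole along $zx_3x_4=1$ with residue proportional to one of $\ln z,\ln x_3,\ln x_4$, and only the sum (where $\ln z+\ln x_3+\ln x_4=0$ on that surface) is regular, so the principal values here are the ones consistent with the pole-free original. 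Integrating out the two surviving variables in each piece --- with one extra logarithm split needed in the $\ln z$ piece --- collapses the density to
\[f_{Z_4}(z)=\frac{16}{\pi^4}\!\left[\frac{\ln^3 z}{z^2-1}+3\,\ln z\,P(z)+2\,Q(z)\right],\]
where $P(z)=\int_0^\infty\frac{x\ln x}{(x^2+1)(z^2x^2+1)}\,dx$ and $Q(z)=\int_0^\infty\frac{x\ln^2 x}{(x^2+1)(z^2x^2+1)}\,dx$; the two $Q$'s and two of the three $P$'s come from the symmetric $\ln x_3$ and $\ln x_4$ pieces, the last $P$ from the $\ln z$ piece.

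Now \eqref{Probability Axiom 1} gives $1=\frac{16}{\pi^4}\bigl(J_4+3\int_0^\infty\ln z\,P(z)\,dz+2\int_0^\infty Q(z)\,dz\bigr)$. Unlike in Theorem~\ref{S(3)}, where the analogous remainder is a vanishing integral of the form \eqref{Cauchy PV 0}, here the remainder terms are nonzero but still computable: reversing the order of integration and using $\int_0^\infty\frac{\ln z}{z^2x^2+1}\,dz=-\frac{\pi\ln x}{2x}$ and $\int_0^\infty\frac{dz}{z^2x^2+1}=\frac{\pi}{2x}$ reduces both of them to $J_3=\int_0^\infty\frac{\ln^2 x}{x^2+1}\,dx=\pi^3/8$, which was already evaluated in the proof of Theorem~\ref{S(3)}; explicitly $\int_0^\infty\ln z\,P(z)\,dz=-\tfrac{\pi}{2}J_3=-\pi^4/16$ and $\int_0^\infty Q(z)\,dz=\tfrac{\pi}{2}J_3=\pi^4/16$. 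Thus $1=\frac{16}{\pi^4}\bigl(J_4-\pi^4/16\bigr)$, so $J_4=\pi^4/8$, and hence $S(4)=\frac{\pi^4/8}{2\cdot 3!}=\frac{\pi^4}{96}$ and $\zeta(4)=\frac{16}{15}\cdot\frac{\pi^4}{96}=\frac{\pi^4}{90}$. (As a check, $J_4=\pi^4/8$ agrees with \eqref{Euler Bernoulli Numbers} since $|B_4|=1/30$.)

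The step I expect to be the main obstacle is the bookkeeping of this iterated reduction: keeping track of which denominators acquire the pole-bearing form $b^2x^2-1$ rather than the harmless form $ax^2+1$, carrying out the logarithm splits consistently as principal values and checking that the pieces recombine to the pole-free original, and justifying the several interchanges of integration order (the original multiple integral and the final $P$- and $Q$-integrals are absolutely convergent, while the intermediate single integrals over the pole-bearing factors are principal values, exactly as in the $k=3$ case). The one genuinely new feature --- and the reason the naive value $\pi^4/192$ that the $\ln^3 z/(z^2-1)$ term would give on its own is wrong --- is noticing that the residual $P$- and $Q$-integrals collapse onto the previously computed $J_3$ rather than onto a vanishing integral.
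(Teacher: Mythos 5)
Your proposal is correct and follows the same strategy as the paper: present $f_{Z_4}(z)$ as a triple integral, integrate out $x_2$ to expose $\ln(zx_3x_4)$, split the logarithm, reduce each piece by further partial fractions, and solve the normalization $\int_0^\infty f_{Z_4}(z)\,dz=1$ for $J_4$. You also correctly single out the essential mechanism, identical to the paper's: unlike the $k=3$ case, not all remainder integrals vanish by \eqref{Cauchy PV 0}; one of them collapses to a nonzero multiple of $J_3$, and accounting for this contribution is what changes the naive $\pi^4/192$ into $\pi^4/96$.

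There is, however, a bookkeeping discrepancy worth noting, and it is in your favor. The paper writes $H_2=\frac{16}{\pi^4}\int_0^\infty\frac{x_3\ln x_3}{(z^2x_3^2+1)(x_3^2+1)}\,dx_3$ and similarly for $H_3$; but the inner $x_4$-integration actually produces $\frac{\ln(zx_3)}{z^2x_3^2+1}$, not $\frac{1}{z^2x_3^2+1}$, so $H_2$ and $H_3$ should each carry the extra factor $\ln(zx_3)$ (resp.\ $\ln(zx_4)$). Your aggregated decomposition
\[f_{Z_4}(z)=\frac{16}{\pi^4}\left[\frac{\ln^3 z}{z^2-1}+3\ln z\,P(z)+2\,Q(z)\right],\qquad
P(z)=\int_0^\infty\frac{x\ln x\,dx}{(x^2+1)(z^2x^2+1)},\quad
Q(z)=\int_0^\infty\frac{x\ln^2 x\,dx}{(x^2+1)(z^2x^2+1)},\]
has the correct multiplicities: $H_1$ contributes $\frac{\ln^3 z}{z^2-1}+\ln z\,P(z)$, and $H_2,H_3$ each contribute $\ln z\,P(z)+Q(z)$. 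Integrating, $3\int_0^\infty\ln z\,P(z)\,dz+2\int_0^\infty Q(z)\,dz = -3\cdot\frac{\pi}{2}J_3+2\cdot\frac{\pi}{2}J_3 = -\frac{\pi}{2}J_3 = -\frac{\pi^4}{16}$, exactly the paper's $-1$ (after the $16/\pi^4$ prefactor), so the two accountings reconcile: the paper's error in writing $H_2,H_3$ is self-cancelling because those pieces integrate to zero in either form, and all of the nonzero contribution is visible in the paper only inside the $H_1$-remainder. In short, your version is a slightly cleaner and more internally consistent rendering of the paper's own argument.
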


\begin{proof}
Let $X_1,\dots,X_4$ be independent, nonnegative Cauchy random variables.  Defining $Z_4$ as their quotient, Theorem \ref{Joint Density Quotient k Cauchy RVs Thm} implies  
\[f_{Z_4}(z) =\frac{16}{\pi^4} \int_{0}^{\infty}\int_{0}^{\infty}\int_{0}^{\infty}  \frac{1}{z^2x^2_2x^2_3x^2_4+1} \prod_{i=2}^{4}\frac{x_i}{x^2_i+1} \ dx_2 \ dx_3 \ dx_4 .\]

Applying the partial fractions identity from \eqref{Generalized PF}, we have 
\[f_{Z_4}(z) = \frac{16}{\pi^4} \int_{0}^{\infty} \int_{0}^{\infty}  \frac{x_3x_4\ln(zx_3x_4)}{(z^2x^2_3x^2_4-1)(x^2_3+1)(x^2_4+1)} \ dx_3 \ dx_4,\]
which splits into three terms: 
\begin{align}
\label{42} H_1=\frac{16}{\pi^4} \int_{0}^{\infty} \int_{0}^{\infty}  \frac{x_3x_4\ln(z)}{(z^2x^2_3x^2_4-1)(x^2_3+1)(x^2_4+1)} \ dx_3 \ dx_4 , \\ 
\label{43} H_2=\frac{16}{\pi^4} \int_{0}^{\infty} \int_{0}^{\infty}  \frac{x_3x_4\ln(x_3)}{(z^2x^2_3x^2_4-1)(x^2_3+1)(x^2_4+1)} \ dx_3 \ dx_4 , \\
\label{44} H_3=\frac{16}{\pi^4} \int_{0}^{\infty} \int_{0}^{\infty}  \frac{x_3x_4\ln(x_4)}{(z^2x^2_3x^2_4-1)(x^2_3+1)(x^2_4+1)} \ dx_3 \ dx_4.
\end{align}

We first evaluate $H_1.$  Using \eqref{Generalized PF}, we see
\[H_1 =\frac{16}{\pi^4} \int_{0}^{\infty}   \frac{x_4\ln^2(z)}{(z^2x^2_4+1)(x^2_4+1)}  +   \frac{16}{\pi^4} \int_{0}^{\infty} \frac{x_4\ln(z)\ln(x_4)}{(z^2x^2_4+1)(x^2_4+1)} \ dx_4. \]
Using \eqref{Generalized PF} on the first term on the right hand side, we have 
\[H_1 = \frac{16}{\pi^4} \frac{\ln^3(z)}{z^2-1}  + \frac{16}{\pi^4} \int_{0}^{\infty}  \frac{x_4\ln(z)\ln(x_4)}{(z^2x^2_4+1)(x^2_4+1)} \ dx_4.\]
Next, for $H_2$, we reverse the order of integration and use \eqref{Generalized PF} to get
\[H_2=\frac{16}{\pi^4} \int_{0}^{\infty}  \frac{x_3\ln(x_3)}{(z^2x^2_3+1)(x^2_3+1)} \ dx_3. \]
Similarly, for $H_3$, we get
\[H_3=\frac{16}{\pi^4} \int_{0}^{\infty}  \frac{x_4\ln(x_4)}{(z^2x^2_4+1)(x^2_4+1)} \ dx_4. \]

Using \eqref{Probability Axiom 1}, we have 
 
\begin{multline}\label{Z_4}
1 =  \frac{16}{\pi^4} J_4  + \frac{16}{\pi^4} \int_{0}^{\infty} \int_{0}^{\infty}  \frac{x_4\ln(z)\ln(x_4)}{(z^2x^2_4+1)(x^2_4+1)} \ dx_4 \ dz \\ + \frac{16}{\pi^4} 
 \int_{0}^{\infty} \int_{0}^{\infty} \frac{x_3\ln(x_3)}{(z^2x^2_3+1)(x^2_3+1)} \ dx_3 \ dz \\+ \frac{16}{\pi^4} \int_{0}^{\infty} \int_{0}^{\infty}  \frac{x_4\ln(x_4)}{(z^2x^2_4+1)(x^2_4+1)} \ dx_4 \ dz.
\end{multline}
The third term and fourth terms on the right hand side of \eqref{Z_4} vanish, which is seen upon reversing the order of integration in each integral and observing each inner integral is of the form in \eqref{Cauchy PV 0}. To evaluate the second term, we make the substitution $z=t/{x_4},$ which transforms it into
\begin{equation}\label{Second Term}
\frac{16}{\pi^4} \int_{0}^{\infty} \int_{0}^{\infty}  \frac{\ln(t)\ln(x_4)}{(t^2+1)(x^2_4+1)} \ dt \ dx_4 -\frac{16}{\pi^4} \int_{0}^{\infty} \int_{0}^{\infty} \frac{\ln^2(x_4)}{(t^2+1)(x^2_4+1)} \ dt \ dx_4.
\end{equation}
The first term of \eqref{Second Term} is of the same form seen in \eqref{Cauchy PV 0}, but we recognize the second term is $-8J_3/{\pi^3}=-1.$ Thus, we see 
\[1=\frac{16}{\pi^4} J_4 -1,\]
so rearranging terms gives  
\[J_4= \frac{\pi^4}{8}.\]
By \eqref{S(k) J_k relation}, we have 
\[S(4) = \sum_{n=0}^{\infty} \frac{1}{(2n+1)^4} = \frac{1}{2(3!)}\frac{\pi^4}{8} = \frac{\pi^4}{96}.\]
Finally, from \eqref{Zeta(2k) in terms of S(2k)}, we have
\[\zeta(4) = \sum_{n=1}^{\infty} \frac{1}{n^4} = \frac{16}{15}\left(\frac{\pi^4}{96}\right) = \frac{\pi^4}{90}.\]
\end{proof}
\subsection{Further Generalization to $S(k,a)$}
We recall from the alternating series from \eqref{Bilateral Hurwitz Zeta Series}, which is 
\[S(k,a) =\sum_{n=-\infty}^{\infty} \frac{(-1)^{nk}}{(an+1)^k}, \quad a>1, \quad  \frac{1}{a} \notin \mathbb{N}, \quad k \in \mathbb{N}.\]
We consider $k$ independent, nonnegative random variables $X_1,\dots ,X_k,$ each with density function
\begin{equation} \label{Generalized Cauchy PDF}
f_{X_i}(x_i) = 
\begin{cases} \dfrac{a}{\pi}\,\sin \left( \dfrac{\pi}{a}\right)\, \dfrac{1}{x^a_i+1} & i=1 \\
\\
\dfrac{2}{\pi}\,\sin \left( \dfrac{\pi}{a}\right)\, \dfrac{x_i^{1-\frac{2}{a}}}{x^2_i+1} & 2 \leq i \leq k 
\end{cases},
\end{equation}
where $a$ has the same conditions that we imposed on $S(k,a).$ 

\begin{theorem} \label{Generalized PDF Theorem}
All of $f_{X_1}(x_1), \dots, f_{X_k}(x_k)$ are valid density functions.   
\end{theorem}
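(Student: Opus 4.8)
The plan is to check, for each index $i$, the two defining properties of a one–variable density: nonnegativity on $(0,\infty)$ and the normalization \eqref{Probability Axiom 1}, i.e. $\int_0^\infty f_{X_i}(x_i)\,dx_i = 1$. Nonnegativity will be immediate. Since $a>1$ we have $0<\pi/a<\pi$, hence $\sin(\pi/a)>0$; and the remaining factors $\tfrac{a}{\pi}$, $\tfrac{2}{\pi}$, $\tfrac{1}{x_i^a+1}$, and $\tfrac{x_i^{\,1-2/a}}{x_i^2+1}$ are all nonnegative for $x_i>0$. So the real content is the normalization.

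The single analytic ingredient I would use is the classical Beta–integral evaluation
\[
\int_0^\infty \frac{t^{\,s-1}}{1+t^{q}}\,dt \;=\; \frac{\pi}{q\,\sin(\pi s/q)}, \qquad 0<s<q,
\]
which one obtains from the substitution $w=t^{q}/(1+t^{q})$, turning the left side into $\tfrac1q B\!\left(\tfrac sq,\,1-\tfrac sq\right)=\tfrac1q\,\Gamma\!\left(\tfrac sq\right)\Gamma\!\left(1-\tfrac sq\right)$, followed by Euler's reflection formula $\Gamma(x)\Gamma(1-x)=\pi/\sin(\pi x)$. (Alternatively one could simply cite the corresponding Gradshteyn–Ryzhik entry, in keeping with the style of the rest of the paper.)

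For $i=1$ I would apply this with $q=a$, $s=1$: the hypothesis $a>1$ gives $0<1<a$, so the integral converges and equals $\dfrac{\pi}{a\,\sin(\pi/a)}$, whence $\int_0^\infty f_{X_1}(x_1)\,dx_1=\tfrac{a}{\pi}\sin(\pi/a)\cdot\tfrac{\pi}{a\,\sin(\pi/a)}=1$. For $2\le i\le k$ I would apply it with $q=2$, $s=2-\tfrac2a$; again $a>1$ forces $0<s<2$, so the integral converges, and since $\sin\!\big(\tfrac{\pi s}{2}\big)=\sin\!\big(\pi-\tfrac{\pi}{a}\big)=\sin\!\big(\tfrac{\pi}{a}\big)$, it equals $\dfrac{\pi}{2\,\sin(\pi/a)}$, giving $\int_0^\infty f_{X_i}(x_i)\,dx_i=\tfrac{2}{\pi}\sin(\pi/a)\cdot\tfrac{\pi}{2\,\sin(\pi/a)}=1$.

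I do not expect a genuine obstacle here; the only points requiring care are matching the exponents so that in the $i\ge 2$ case the argument of the sine in the Beta–integral formula collapses to $\pi/a$ via $\sin(\pi-x)=\sin x$, and observing that in both cases the convergence condition $0<s<q$ is exactly equivalent to the standing hypothesis $a>1$. In particular, the extra condition $1/a\notin\mathbb{N}$ plays no role in this statement — it is only needed later, when the series $S(k,a)$ itself is summed.
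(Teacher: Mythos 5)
Your proof is correct and rests on the same key ingredient as the paper's: reducing $\int_0^\infty x^{s-1}/(1+x^q)\,dx$ to $\tfrac1q\Gamma(s/q)\Gamma(1-s/q)$ and applying Euler's reflection formula. The only difference is cosmetic — the paper produces the Gamma product by writing $1/(1+x^a)=\int_0^\infty e^{-y(1+x^a)}\,dy$ and then changing variables, whereas you invoke the Beta-integral evaluation directly (or derive it via $w=t^q/(1+t^q)$); your version is slightly more streamlined, and your explicit remarks on nonnegativity and on the convergence window $0<s<q\iff a>1$ are careful points the paper leaves implicit.
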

\begin{proof}
The crux of the proof is the application of Euler's Reflection Formula:
\begin{equation} \label{Euler Reflection Formula}
\Gamma(t)\Gamma(1-t)=\pi \csc(\pi t), \quad t>0, \quad t \notin \mathbb{N},
\end{equation}
where  
\[\Gamma(t)=\int_{0}^{\infty} x^{t-1} e^{-x} \ dx.\]

For $i=1,$ we see
\begin{align}
\int_{0}^{\infty} f_{X_1}(x_1) \ dx_1  & =  \frac{a}{\pi} \sin \left( \frac{\pi}{a}\right) \int_{0}^{\infty} \frac{1}{x^a_1+1} \ dx_1  \nonumber \\
& = \frac{a}{\pi} \sin \left( \frac{\pi}{a}\right) \int_{0}^{\infty} \int_{0}^{\infty} e^{-y(1+x^a_1)} \ dy \   dx_1 \label{Substitution into Gamma Product}  \\ 
& = \frac{a}{\pi} \sin \left( \frac{\pi}{a}\right)  \int_{0}^{\infty} t^{\frac{1}{a}-1} e^{-z} \ dt \ \int_{0}^{\infty} \frac{ y^{\frac{1}{a}} e^{-y}}{a} \ dy \label{Gamma Product Integrals} \\
& = \frac{a}{\pi} \sin \left( \frac{\pi}{a}\right) \frac{\Gamma \left(\frac{1}{a} \right) \Gamma \left(1-\frac{1}{a} \right)}{a} \nonumber \\ 
& = 1 \nonumber,
\end{align}
by which we made the substitution $x_1=\left(t/y \right)^{1/a}$ on \eqref{Substitution into Gamma Product} to obtain \eqref{Gamma Product Integrals}.

For $i>1,$ we have  
\begin{align}
\int_{0}^{\infty} f_{X_i}(x_i) \ dx_i 
& = \frac{2}{\pi} \sin \left( \frac{\pi}{a}\right) \int_{0}^{\infty} \frac{x_i^{1-\frac{2}{a}}}{x^2_i+1} \ dx_i \nonumber \\
& = \frac{2}{\pi} \sin \left( \frac{\pi}{a}\right) \int_{0}^{\infty} \int_{0}^{\infty} x_i^{1-\frac{2}{a}} e^{-y(1+x^2_i)} \ dy \   dx_i \label{40} \\
& = \frac{2}{\pi} \sin \left(\frac{\pi}{a} \right) \frac{\Gamma(\frac{1}{a})\Gamma(1-\frac{1}{a})}{2} \label{41}\\
& = 1 \nonumber , 
\end{align}
in which \eqref{41} follows from the substitution $x_i=\sqrt{t/y}$ on \eqref{40}.   
\end{proof}

\begin{remark*}
When $a=2,$ we see $X_1, \dots, X_k$ are Cauchy.
\end{remark*}

Define the random variable for $k \geq 2,$ 
\[Z_{k,a}=X_1 / (X_2 \dots X_k)^{\frac{2}{a}}.\] We seek the density function $f_{Z_{k,a}}(z).$ 

\begin{theorem} \label{Joint Density Generalized k Cauchy RVs Thm}
The density function $f_{Z_{k,a}}(z)$ is 
\begin{equation} \label{Joint Density Generalized k Cauchy RVs}
f_{Z_{k,a}}(z) =  \frac{a  \left(2\sin \left( \frac{\pi}{a} \right) \right)^k}{2 \pi^k} \int_{0}^{\infty}...\int_{0}^{\infty} \frac{ \ x_2 \dots x_k}{(z^a x^2_2 \dots x_k^2 + 1)(x^2_2+1) \dots (x^2_k+1)} \ dx_2 \dots dx_k.
\end{equation}
\end{theorem}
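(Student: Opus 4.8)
The plan is to argue by induction on $k$, mirroring the proof of Theorem~\ref{Joint Density Quotient k Cauchy RVs Thm}, with one preliminary step: computing the densities of the powers $X_i^{2/a}$. Recall that if a nonnegative random variable $X$ has density $f_X$ and $\alpha>0$, then $Y=X^{\alpha}$ has density $f_Y(y)=\frac{1}{\alpha}y^{\frac{1}{\alpha}-1}f_X\!\left(y^{1/\alpha}\right)$ for $y>0$, by the monotone change of variables $y=x^\alpha$. Applying this with $\alpha=2/a$ to $X_i$ for $2\le i\le k$ and simplifying using \eqref{Generalized Cauchy PDF}, one finds that $W_i:=X_i^{2/a}$ has density
\[
f_{W_i}(w)=\frac{a}{\pi}\sin\!\left(\frac{\pi}{a}\right)\frac{w^{a-2}}{w^{a}+1},\qquad w>0,
\]
which is a valid density (a monotone image of $f_{X_i}$ from Theorem~\ref{Generalized PDF Theorem}; its integrability near $0$ uses $a>1$). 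Since the $X_j$ are independent, $W_i$ is independent of $X_1,\dots,X_{i-1}$ and of the remaining $W_j$.

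For the base case $k=2$, write $Z_{2,a}=X_1/W_2$ and apply the quotient formula \eqref{Quotient}:
\[
f_{Z_{2,a}}(z)=\int_{0}^{\infty} w\, f_{X_1}(zw)\, f_{W_2}(w)\, dw
=\frac{a^{2}}{\pi^{2}}\sin^{2}\!\left(\frac{\pi}{a}\right)\int_{0}^{\infty}\frac{w^{a-1}}{(z^{a}w^{a}+1)(w^{a}+1)}\, dw.
\]
The substitution $x_2=w^{a/2}$ (so $w^{a}=x_2^{2}$ and $w^{a-1}\,dw=\tfrac{2}{a}x_2\,dx_2$) turns this into $\frac{2a}{\pi^{2}}\sin^{2}\!\left(\frac{\pi}{a}\right)\int_{0}^{\infty}\frac{x_2}{(z^{a}x_2^{2}+1)(x_2^{2}+1)}\,dx_2$, which is exactly \eqref{Joint Density Generalized k Cauchy RVs} for $k=2$ since $\frac{a(2\sin(\pi/a))^{2}}{2\pi^{2}}=\frac{2a\sin^{2}(\pi/a)}{\pi^{2}}$.

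For the inductive step, assume the formula holds for $k=m$. The key identity is
\[
Z_{m+1,a}=\frac{X_1}{(X_2\cdots X_{m+1})^{2/a}}=\frac{Z_{m,a}}{X_{m+1}^{2/a}}=\frac{Z_{m,a}}{W_{m+1}},
\]
a quotient of the two independent variables $Z_{m,a}$ and $W_{m+1}$. Applying \eqref{Quotient}, inserting the inductive formula for $f_{Z_{m,a}}(zw)$ and the density $f_{W_{m+1}}(w)$ above, and collecting constants into $\frac{a^{2}(2\sin(\pi/a))^{m}\sin(\pi/a)}{2\pi^{m+1}}$, one is left with a single occurrence of $w^{a-1}\,dw$ and of $w^{a}$ inside an $m$-fold integral; the same substitution $x_{m+1}=w^{a/2}$ then replaces these by $\tfrac{2}{a}x_{m+1}\,dx_{m+1}$ and $x_{m+1}^{2}$, and the constants reorganize into $\frac{a(2\sin(\pi/a))^{m+1}}{2\pi^{m+1}}$, giving precisely \eqref{Joint Density Generalized k Cauchy RVs} with $k=m+1$. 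As an additional confirmation, exactly as in the proof of Theorem~\ref{Joint Density Quotient k Cauchy RVs Thm}, the right-hand side of \eqref{Joint Density Generalized k Cauchy RVs} satisfies \eqref{Probability Axiom 1}: the integrand is nonnegative, so by Tonelli one integrates in $z$ first via $\int_{0}^{\infty}\frac{dz}{cz^{a}+1}=\frac{\pi}{a\sin(\pi/a)}\,c^{-1/a}$ with $c=x_2^{2}\cdots x_k^{2}$, and the remaining one-dimensional integrals, evaluated as in the proof of Theorem~\ref{Generalized PDF Theorem}, collapse the whole expression to $1$.

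The only point requiring real care — rather than a genuine difficulty — is the bookkeeping: deriving $f_{W_i}$ correctly from \eqref{Generalized Cauchy PDF}, tracking the constants $\frac{a}{\pi}\sin(\pi/a)$, $\frac{2}{\pi}\sin(\pi/a)$ and the powers of $2$ through the recursion, and performing the substitution $w^{a}=x^{2}$ so that the exponent $1-\tfrac{2}{a}$ implicit in \eqref{Generalized Cauchy PDF} becomes the clean exponent $1$ appearing in \eqref{Joint Density Generalized k Cauchy RVs}. Finiteness of all the integrals entering \eqref{Quotient} is automatic from Theorem~\ref{Generalized PDF Theorem} together with the hypotheses $a>1$ and $1/a\notin\mathbb{N}$, so no separate convergence argument is needed; and specializing $a=2$ recovers Theorem~\ref{Joint Density Quotient k Cauchy RVs Thm}, a useful consistency check.
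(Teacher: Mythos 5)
Your proof is correct, but it follows a genuinely different route from the paper's. The paper establishes Theorem~\ref{Joint Density Generalized k Cauchy RVs Thm} directly via the cumulative distribution function: it writes $F_{Z_{k,a}}(z)=\text{Pr}\bigl(X_1\le z(X_2\cdots X_k)^{2/a}\bigr)$ as a $k$-fold iterated integral of the joint density, with the $x_1$-integral running from $0$ to $z(x_2\cdots x_k)^{2/a}$, then obtains $f_{Z_{k,a}}$ by differentiating in $z$ via \eqref{Differentiate CDF}, and finally verifies \eqref{Probability Axiom 1} by Tonelli together with the substitution $z=t(x_2\cdots x_k)^{-2/a}$ and Theorem~\ref{Generalized PDF Theorem}. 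You instead first compute the density of $W_i=X_i^{2/a}$, verify the base case $k=2$ with the quotient formula \eqref{Quotient}, and then induct using $Z_{m+1,a}=Z_{m,a}/W_{m+1}$; the substitution $x_{m+1}=w^{a/2}$ carries out the same bookkeeping that the paper's CDF derivative handles automatically. I checked the details: the density $f_{W_i}(w)=\frac{a}{\pi}\sin(\frac{\pi}{a})\frac{w^{a-2}}{w^a+1}$ is right, the constants $\frac{a^2\sin(\pi/a)(2\sin(\pi/a))^m}{2\pi^{m+1}}\cdot\frac{2}{a}=\frac{a(2\sin(\pi/a))^{m+1}}{2\pi^{m+1}}$ reorganize as claimed, and your closing Tonelli check of \eqref{Probability Axiom 1} matches the paper's. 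Your approach has the virtue of directly paralleling the proof of Theorem~\ref{Joint Density Quotient k Cauchy RVs Thm} and isolating the structural step (replacing each $X_i$ by $W_i=X_i^{2/a}$), while the paper's CDF route is shorter because it never needs the auxiliary densities $f_{W_i}$; both specialize to $a=2$ consistently. One minor wording slip: in the inductive step the inner integral over $x_2,\dots,x_m$ is $(m-1)$-fold, not $m$-fold, though this does not affect the argument.
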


\begin{proof}
We start with the cumulative distribution function
\begin{equation} \label{CDF Z_{k,a}}
F_{Z_{k,a}}(z) = \text{Pr}(Z_{k,a} \leq z).
\end{equation}
Expanding the right hand side of \eqref{CDF Z_{k,a}}, we find
\begin{align*}
\text{Pr}(Z_{k,a} \leq z) & = \text{Pr}\left(X_1\leq z (X_2 \dots X_k)^{\frac{2}{a}} \right) \\
& = F_{X_1}\left(z (X_2 \dots X_k)^{\frac{2}{a}}\right) \\
& = \frac{a  \left(2\sin \left( \frac{\pi}{a} \right) \right)^k}{2 \pi^k} \int_{0}^{\infty}  \dots \int_{0}^{\infty}  \int_{0}^{z(x_2 \dots x_k)^\frac{2}{a}} \frac{(x_2 \dots x_k)^{1-\frac{2}{a}}}{(x_1^a + 1)(x^2_2+1) \dots (x^2_k+1)}  \ dx_1 \ dx_2 \dots \ dx_k. 
\end{align*}
Thus, by \eqref{Differentiate CDF}, we have
\[f_{Z_{k,a}}(z) = \frac{a  \left(2\sin \left( \frac{\pi}{a} \right) \right)^k}{2 \pi^k} \int_{0}^{\infty}...\int_{0}^{\infty} \frac{ \ x_2 \dots x_k}{(z^a x^2_2 \dots x_k^2 + 1)(x^2_2+1) \dots (x^2_k+1)} \ dx_2 \dots dx_k.\]

Now, we consider 
\begin{equation} \label{Integral PDF}
\int_{0}^{\infty} f_{Z_{k,a}}(z) \ dz,
\end{equation}
whose integrand is nonnegative. Hence, Tonelli's Theorem allows us to reverse the order of integration in \eqref{Integral PDF}. We integrate with respect to $z$ first and then with respect to the other $k-1$ variables. Making the substitution $z= t(x_2 \dots x_k)^{-2/a},$ and then applying Theorem \ref{Generalized PDF Theorem} several times, we see \eqref{Integral PDF} is equal to $1,$ satisfying \eqref{Probability Axiom 1}. 
\end{proof}

\begin{remark*}
When $a=2$, we recover $f_{Z_k}(z)$ from \eqref{Joint Density Quotient k Cauchy RVs}. 
\end{remark*}

In simplifying the density function when $k>2,$ we encounter integrals of the form 
\begin{equation} \label{Generalized Cauchy PV}
\int_{0}^{\infty} \frac{t^{m-1}}{t^n-1} \ dt = -\frac{\pi}{n} \cot\left(\frac{m\pi}{n}\right), \quad m<n,
\end{equation}
as listed in Gradshteyn and Rhyzik entry \cite[Section 3.241: 4]{GH}. At the end, we arrive at the logarithmic integral
\begin{equation} \label{J_(k,a)}
J_{k,a}= \int_{0}^{\infty} \frac{\ln^{k-1}(z)}{z^a-(-1)^k} \ dz, 
\end{equation}
which generalizes $J_k$ from \eqref{J_k}. We now highlight the relation between $J_{k,a}$ and $S(k,a).$  

\begin{theorem} \label{S(k,a) J(k,a) Relation Theorem}
The following relation holds.
\begin{equation} \label{S(k,a) J(k,a) Relation}
S(k,a)=\frac{J_{k,a}}{(k-1)!}.
\end{equation}

\end{theorem}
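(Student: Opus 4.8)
The plan is to mirror the passage from $J_k$ to $S(k)$ carried out in \eqref{J_k}–\eqref{S(k) J_k relation}, but now retaining the contribution of the ``tail'' $\int_1^\infty$ rather than simply using it to double the head $\int_0^1$: this tail will produce exactly the negatively-indexed terms of the bilateral series $S(k,a)$. First I would split $J_{k,a}=\int_0^1+\int_1^\infty$ and substitute $z=1/u$ in the tail. Using $\ln^{k-1}(1/u)=(-1)^{k-1}\ln^{k-1}(u)$ together with the elementary identity $u^{-a}-(-1)^k=(-1)^{k+1}u^{-a}\bigl(u^a-(-1)^k\bigr)$, one checks that the parity factors collapse to $+1$ and the tail becomes $\int_0^1 \frac{z^{a-2}\ln^{k-1}(z)}{z^a-(-1)^k}\,dz$, so that
\[
J_{k,a}=\int_0^1 \frac{(1+z^{a-2})\,\ln^{k-1}(z)}{z^a-(-1)^k}\,dz .
\]
Here $a-2>-1$ because $a>1$, so the new factor $z^{a-2}$ causes no integrability problem at $0$; and when $k$ is even the apparent singularity at $z=1$ is removable, since $\ln^{k-1}z$ vanishes there to order $k-1\ge1$ while $z^a-1$ vanishes only to order $1$.

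Next I would expand, for $0<z<1$, the geometric series $\frac{1}{z^a-(-1)^k}=-\sum_{n\ge0}(-1)^{k(n+1)}z^{an}$, interchange sum and integral, and evaluate each resulting term by the standard formula $\int_0^1 z^m\ln^{k-1}(z)\,dz=\frac{(-1)^{k-1}(k-1)!}{(m+1)^k}$ (repeated integration by parts, exactly as in \eqref{IBP}). Treating the two monomials $z^{an}$ and $z^{an+a-2}$ separately and collapsing the sign bookkeeping gives
\[
J_{k,a}=(k-1)!\sum_{n=0}^{\infty}(-1)^{kn}\!\left(\frac{1}{(an+1)^k}+\frac{1}{(an+a-1)^k}\right).
\]
In the second family I would then substitute $n\mapsto m=-(n+1)$: since $a(n+1)-1=-(am+1)$, we get $(an+a-1)^k=(-1)^k(am+1)^k$, and since $(-1)^{kn}=(-1)^{km}(-1)^{k}$ the parity factors cancel to leave $\frac{(-1)^{km}}{(am+1)^k}$, while $m$ runs over $\{-1,-2,-3,\dots\}$ as $n$ runs over $\{0,1,2,\dots\}$. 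Hence the two families together reassemble $\sum_{n=-\infty}^{\infty}\frac{(-1)^{nk}}{(an+1)^k}=S(k,a)$, which is \eqref{S(k,a) J(k,a) Relation}.

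I expect the main obstacle to be analytic bookkeeping rather than any single deep step. One must justify the termwise integration: for $k$ even the series under the integral has a fixed sign and the Monotone Convergence Theorem applies directly, while for $k$ odd I would either pair consecutive terms into nonnegative blocks or dominate by $\sum_{n}z^{an}|\ln z|^{k-1}$ and invoke dominated convergence. One must also confirm that the bilateral series defining $S(k,a)$ actually converges under the stated hypotheses $a>1$, $1/a\notin\mathbb{N}$ (the latter keeps every denominator $an+1$ nonzero, and pairing $n$ with $-n$ shows the symmetric partial sums converge even when $k=1$), and verify the removability of the $z=1$ singularity so that all the manipulations are legitimate. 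The reindexing $n\mapsto-(n+1)$ is the one genuinely new ingredient compared with the $J_k$ computation, and the only care it demands is tracking the factors $(-1)^{\pm k}$.
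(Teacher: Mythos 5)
Your proposal is correct and follows essentially the same route as the paper: split $J_{k,a}$ into $\int_0^1+\int_1^\infty$, substitute $z=1/u$ in the tail, expand the integrand into a geometric series, integrate term by term via $\int_0^1 z^m\ln^{k-1}(z)\,dz$, and reindex the second family of terms to produce the negatively-indexed half of the bilateral sum $S(k,a)$. Your version is somewhat more scrupulous than the paper's about the parity factors $(-1)^{k-1}$ from $\ln^{k-1}(1/u)$ and about justifying the interchange of sum and integral for $k$ odd, but these are refinements of, not departures from, the same argument.
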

\begin{proof}
We write
\begin{align}
J_{k,a} & =\int_{0}^{1} \frac{\ln^{k-1}(z)}{z^{a}-(-1)^k} \ dz + \int_{1}^{\infty} \frac{\ln^{k-1}(z)}{z^{a}-(-1)^k} \ dz  \label{Split integrals} \\
& = \int_{0}^{1} \frac{\ln^{k-1}(z)}{z^{a}-(-1)^k} \ dz + \int_{0}^{1} \frac{\ln^{k-1}(u)}{u^{-a}-(-1)^k} \frac{1}{u^2} \ du \label{z=1/u}  \\
& = (k-1)! \sum_{n=0}^{\infty} \frac{(-1)^{nk}}{(an+1)^k} + (k-1)! \sum_{n=1}^{\infty} \frac{(-1)^{nk}}{(1-an)^k} \label{summation parts} \\
& =  (k-1)! \sum_{n=0}^{\infty} \frac{(-1)^{nk}}{(an+1)^k}+  (k-1)! \sum_{n=-\infty}^{-1} \frac{(-1)^{nk}}{(an+1)^k} \nonumber \\
& =  (k-1)! S(k,a) \nonumber,
\end{align}
in which \eqref{z=1/u} is the result of a substitution $z=1/u$ performed on the second term in \eqref{Split integrals}, and \eqref{summation parts} is the result of converting each integrand in \eqref{z=1/u} into a geometric series, exchanging summation and integration, and finally integrating by parts. 
\end{proof}

\subsection{Evaluating $S(2,a)$ and $S(3,a)$}

With this machinery, we seek the closed forms of $S(2,a)$ and $S(3,a),$ which have manageable calculations. The former sum is a standard complex variables result. Specific examples of $S(2,a)$ for different $a$ are provided in \cite{JC}. 

\begin{theorem} 
$S(2,a)$ is given by the formula
\begin{equation}\label{S(2,a) Result}
S(2,a)= \sum_{n=-\infty}^{\infty} \frac{1}{(an+1)^2}=\frac{\pi^2}{a^2} \csc^{2} \left(\frac{\pi}{a}\right).
\end{equation}

\begin{proof}
Let $X_1$ and $X_2$ be independent, nonnegative random variables defined in \eqref{Generalized Cauchy PDF}, and let $Z_{2,a}=X_1/(X_2)^{2/a}.$ Then,
\[f_{Z_{2,a}}(z)= \frac{2a\sin^{2}(\frac{\pi}{a})}{\pi^2}\int_{0}^{\infty} \frac{x_2}{(z^ax^2_2+1)(x^2_2+1)} \ dx_2,\] from Theorem \ref{Joint Density Generalized k Cauchy RVs Thm}.

Observing $z^a=\left(z^{a/2}\right)^{2},$ we can use \eqref{Generalized PF} to see that
\[f_{Z_{2,a}}(z) =\frac{2a  \sin^{2} \left( \frac{\pi}{a} \right)}{\pi^2}  \frac{\ln(z^{\frac{a}{2}})}{z^a-1} = \frac{a^2  \sin^{2} \left( \frac{\pi}{a} \right)}{\pi^2}  \frac{\ln(z)}{z^a-1}. \] 
Thus, using \eqref{Probability Axiom 1} and then rearranging terms, we see
\[J_{2,a}=\frac{\pi^2}{a^2} \csc^{2} \left(\frac{\pi}{a}\right).\]
Finally, we use Theorem \ref{S(k,a) J(k,a) Relation Theorem} to see 
\[S(2,a) = \sum_{n=-\infty}^{\infty} \frac{1}{(an+1)^2}
= \frac{1}{(2-1)!} J_{2,a} = \frac{\pi^2}{a^2} \csc^{2} \left(\frac{\pi}{a}\right).\]
\end{proof}
\end{theorem}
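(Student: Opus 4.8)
The plan is to follow exactly the template already established for $S(2)$ in the Cauchy case and for $S(2,a)$ implicitly via Theorems \ref{Joint Density Generalized k Cauchy RVs Thm} and \ref{S(k,a) J(k,a) Relation Theorem}. First I would set up the two independent, nonnegative random variables $X_1$ and $X_2$ with densities given by \eqref{Generalized Cauchy PDF}, which are legitimate by Theorem \ref{Generalized PDF Theorem}, and form $Z_{2,a}=X_1/(X_2)^{2/a}$. Theorem \ref{Joint Density Generalized k Cauchy RVs Thm} with $k=2$ gives the density $f_{Z_{2,a}}(z)$ as the single integral $\frac{2a\sin^2(\pi/a)}{\pi^2}\int_0^\infty \frac{x_2}{(z^a x_2^2+1)(x_2^2+1)}\,dx_2$, so no multi-dimensional bookkeeping is needed here.

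Next I would evaluate that inner integral. The key observation is that $z^a=\bigl(z^{a/2}\bigr)^2$, so writing $y=z^{a/2}$ the integrand matches the left side of the generalized partial fractions identity \eqref{Generalized PF} with $s$ even (the $+1$ denominator, i.e. $x^2 y^2 - (-1)^s$ with $s$ even reading as $x^2y^2-1$ — wait, here it is $x^2y^2+1$, so one uses \eqref{Partial Fractions Identity}/\eqref{Generalized PF} in the form with $y^2-1$ in the denominators of the split). Splitting and integrating each piece over $(0,\infty)$ produces the closed form $f_{Z_{2,a}}(z)=\frac{2a\sin^2(\pi/a)}{\pi^2}\cdot\frac{\ln(z^{a/2})}{z^a-1}=\frac{a^2\sin^2(\pi/a)}{\pi^2}\cdot\frac{\ln z}{z^a-1}$, using $\ln(z^{a/2})=\tfrac{a}{2}\ln z$. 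Then I would invoke the normalization axiom \eqref{Probability Axiom 1}: integrating $f_{Z_{2,a}}$ over $(0,\infty)$ must give $1$, and the surviving integral is precisely $J_{2,a}=\int_0^\infty \frac{\ln z}{z^a-1}\,dz$ (the $k=2$ instance of \eqref{J_(k,a)}, with $(-1)^k=1$). Rearranging yields $J_{2,a}=\frac{\pi^2}{a^2}\csc^2(\pi/a)$.

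Finally, Theorem \ref{S(k,a) J(k,a) Relation Theorem} with $k=2$ gives $S(2,a)=J_{2,a}/(2-1)!=J_{2,a}$, which is the claimed formula $\frac{\pi^2}{a^2}\csc^2(\pi/a)$. The only genuinely delicate step is the partial-fractions evaluation of $\int_0^\infty \frac{x_2}{(z^a x_2^2+1)(x_2^2+1)}\,dx_2$: one must confirm that after splitting via \eqref{Generalized PF} both resulting antiderivatives (each of the form $\tfrac12\ln(\alpha x^2+1)/(\text{const})$) combine at the limits $0$ and $\infty$ to leave exactly $\frac{\ln(z^{a/2})}{z^a-1}$, with the constant $1/(y^2-1)=1/(z^a-1)$ matching. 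This is a short computation but is where a sign or a factor of $2$ could slip; everything else is a direct citation of the already-proven theorems and the standard fact that $\int_0^1 z^{an}\ln z\,dz=-1/(an+1)^2$ underlying the relation theorem.
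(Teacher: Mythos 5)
Your proposal follows the paper's proof step for step: set up $X_1,X_2$ with the generalized densities, apply Theorem \ref{Joint Density Generalized k Cauchy RVs Thm} with $k=2$, substitute $y=z^{a/2}$ and use the partial-fractions identity to collapse the inner integral to $\frac{\ln(z^{a/2})}{z^a-1}$, invoke normalization \eqref{Probability Axiom 1} to extract $J_{2,a}=\frac{\pi^2}{a^2}\csc^2(\pi/a)$, and finish with Theorem \ref{S(k,a) J(k,a) Relation Theorem}. Your momentary hesitation about the parity of $s$ in \eqref{Generalized PF} is resolved correctly in favor of the form with $x^2y^2+1$ on the left and $y^2-1$ in the split denominators (i.e., \eqref{Partial Fractions Identity}), and everything else is exactly the paper's argument.
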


\begin{remark*}
The case $a=2$ recovers Pace's Basel Problem solution.   
\end{remark*}

Now we find $S(3,a).$
\begin{theorem} 
$S(3,a)$ is given by the formula
\begin{equation}\label{S(3,a) Result}
S(3,a)= \sum_{n=-\infty}^{\infty} \frac{(-1)^n}{(an+1)^3}=\frac{\pi^3}{2a^3} \csc^{3} \left(\frac{\pi}{a}\right)+ \frac{\pi^3}{2a^3} \cot^{2} \left(\frac{\pi}{a}\right)\csc \left(\frac{\pi}{a}\right).
\end{equation}
\end{theorem}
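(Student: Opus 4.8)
The plan is to run, in the one--parameter family \eqref{Generalized Cauchy PDF}, the same computation that produced Theorem~\ref{S(3)}; the one new phenomenon is that a ``correction'' double integral, which vanished identically when $a=2$, now survives and produces the second summand in \eqref{S(3,a) Result}. Take $X_1,X_2,X_3$ independent with densities \eqref{Generalized Cauchy PDF} and set $Z_{3,a}=X_1/(X_2X_3)^{2/a}$. By Theorem~\ref{Joint Density Generalized k Cauchy RVs Thm},
\[
f_{Z_{3,a}}(z)=\frac{4a\sin^{3}(\pi/a)}{\pi^{3}}\int_{0}^{\infty}\!\!\int_{0}^{\infty}\frac{x_2x_3}{(z^{a}x_2^{2}x_3^{2}+1)(x_2^{2}+1)(x_3^{2}+1)}\,dx_2\,dx_3 .
\]
First I would integrate out $x_2$: writing $z^{a}x_2^{2}x_3^{2}+1=x_2^{2}(z^{a/2}x_3)^{2}+1$ and applying the partial fractions identity \eqref{Generalized PF}, the $x_2$--integral collapses to $\ln(z^{a/2}x_3)/(z^{a}x_3^{2}-1)$, mirroring the reduction leading to \eqref{S(3) split} in the proof of Theorem~\ref{S(3)}. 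Splitting $\ln(z^{a/2}x_3)=\tfrac a2\ln z+\ln x_3$ and applying \eqref{Generalized PF} a second time to the $\tfrac a2\ln z$ piece (now with $z^{a}x_3^{2}-1=x_3^{2}(z^{a/2})^{2}-1$) evaluates one integral and gives
\[
f_{Z_{3,a}}(z)=\frac{4a\sin^{3}(\pi/a)}{\pi^{3}}\Bigl[\frac{a^{2}}{4}\,\frac{\ln^{2}z}{z^{a}+1}+\int_{0}^{\infty}\frac{x_3\ln x_3}{(z^{a}x_3^{2}-1)(x_3^{2}+1)}\,dx_3\Bigr].
\]

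Next I would impose the normalization \eqref{Probability Axiom 1} and integrate in $z$. The first bracketed summand integrates to $\tfrac{a^{2}}{4}J_{3,a}$, with $J_{3,a}$ the integral of \eqref{J_(k,a)} at $k=3$. For the second summand I would reverse the order of integration -- the apparent pole at $z^{a}x_3^{2}=1$ being read, as around \eqref{Cauchy PV 0}, in the principal value sense -- do the $z$--integral first by \eqref{Generalized Cauchy PV} with $m=1$, $n=a$, which gives the factor $-\tfrac{\pi}{a}\cot(\pi/a)\,x_3^{-2/a}$ (nonzero, unlike the case $a=2$), and then evaluate
\[
\int_{0}^{\infty}\frac{x_3^{1-2/a}\ln x_3}{x_3^{2}+1}\,dx_3=\frac{\pi^{2}}{4}\cot(\pi/a)\csc(\pi/a)
\]
by differentiating the Mellin transform $\int_{0}^{\infty}x^{s-1}/(x^{2}+1)\,dx=\tfrac{\pi}{2}\csc(\pi s/2)$ -- itself a consequence of Euler's reflection formula \eqref{Euler Reflection Formula} -- at $s=2-2/a$. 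Hence the correction double integral equals $-\tfrac{\pi^{3}}{4a}\cot^{2}(\pi/a)\csc(\pi/a)$.

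Assembling the pieces, \eqref{Probability Axiom 1} becomes, after the simplification $\sin^{3}(\pi/a)\cot^{2}(\pi/a)\csc(\pi/a)=\cos^{2}(\pi/a)$,
\[
1=\frac{a^{3}\sin^{3}(\pi/a)}{\pi^{3}}\,J_{3,a}-\cos^{2}(\pi/a),
\]
so $J_{3,a}=\pi^{3}\bigl(1+\cos^{2}(\pi/a)\bigr)/\bigl(a^{3}\sin^{3}(\pi/a)\bigr)$. Then Theorem~\ref{S(k,a) J(k,a) Relation Theorem} gives $S(3,a)=J_{3,a}/2!$, and splitting $\bigl(1+\cos^{2}(\pi/a)\bigr)/\sin^{3}(\pi/a)=\csc^{3}(\pi/a)+\cot^{2}(\pi/a)\csc(\pi/a)$ yields \eqref{S(3,a) Result}. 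I expect the main obstacle to be the bookkeeping in the middle steps: keeping the principal--value reading of the $1/(z^{a}x_3^{2}-1)$ and $1/(x_3^{2}y^{2}-1)$ factors consistent through the repeated use of \eqref{Generalized PF} and the order reversals, and carrying out the differentiation--under--the--integral--sign evaluation of the $\ln x_3$ Mellin integral with the correct signs, so that the correction term emerges precisely as $-\tfrac{\pi^{3}}{4a}\cot^{2}(\pi/a)\csc(\pi/a)$ and combines cleanly with $J_{3,a}$.
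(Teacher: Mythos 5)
Your proposal is correct and follows the same overall architecture as the paper's proof: start from $f_{Z_{3,a}}$, integrate out $x_2$ via \eqref{Generalized PF}, split $\ln(z^{a/2}x_3)$, apply \eqref{Generalized PF} again to isolate the $\ln^2 z/(z^a+1)$ term, impose \eqref{Probability Axiom 1}, reverse the order of integration in the correction double integral, and substitute to invoke \eqref{Generalized Cauchy PV}, which leaves the single integral $\int_0^\infty x_3^{1-2/a}\ln(x_3)/(x_3^2+1)\,dx_3$. The one place you diverge is the evaluation of that last integral: the paper substitutes $x_3=u^{a/(2a-2)}$ and uses the partial-fraction split $1/(u^b+1)=1/(u^b-1)-2/(u^{2b}-1)$ to reduce to the already-computed $J_{2,b}$ with $b=a/(a-1)$ (and $J_{2,2b}$), whereas you differentiate the Mellin transform $\int_0^\infty x^{s-1}/(x^2+1)\,dx=\tfrac{\pi}{2}\csc(\pi s/2)$ with respect to $s$ at $s=2-2/a$. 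Both are valid and land on the same value $\tfrac{\pi^2}{4}\cot(\pi/a)\csc(\pi/a)$; yours is arguably a bit more direct and avoids the bookkeeping of chaining back through $J_{2,\cdot}$, at the cost of importing a Mellin identity that the paper prefers to deduce internally from its own $S(2,a)$ computation. The final assembly, including the simplification $\sin^3(\pi/a)\cot^2(\pi/a)\csc(\pi/a)=\cos^2(\pi/a)$ and the split $\bigl(1+\cos^2\bigr)/\sin^3=\csc^3+\cot^2\csc$, is exactly as in the paper.
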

\begin{proof}
Let $X_1, X_2,$ and $X_3$ be independent, nonnegative random variables defined in \eqref{Generalized Cauchy PDF}, and let $Z_{3,a}=X_1/(X_2X_3)^{2/a}.$ Then, Theorem \ref{Joint Density Generalized k Cauchy RVs Thm} implies
\[f_{Z_{3,a}}(z)= \frac{4a\sin^{3}(\frac{\pi}{a})}{\pi^3} \int_{0}^{\infty} \int_{0}^{\infty} \frac{x_2x_3}{(z^ax^2_2x^2_3+1)(x^2_2+1)(x^2_3+1)} \ dx_2 \ dx_3. \]

We use \eqref{Generalized PF} twice to see
\begin{align*}
f_{Z_{3,a}}(z) & = \frac{2a\sin^{3}(\frac{\pi}{a})}{\pi^3} \int_{0}^{\infty} \frac{x_3\ln(z^ax_3^2)}{(x_3^2+1)(z^ax_3^2-1)} \ dx_3 \\
& = \frac{2a^2\sin^{3}(\frac{\pi}{a})}{\pi^3} \int_{0}^{\infty} \frac{x_3\ln(z)}{(x_3^2+1)(z^ax_3^2-1)} \ dx_3 + \frac{4a\sin^{3}(\frac{\pi}{a})}{\pi^3} \int_{0}^{\infty} \frac{x_3\ln(x_3)}{(x_3^2+1)(z^ax_3^2-1)} \ dx_3 \\
& = \frac{a^3 \sin^{3}(\frac{\pi}{a})}{\pi^3} \frac{\ln^2(z)}{z^a+1} + \frac{4a\sin^{3}(\frac{\pi}{a})}{\pi^3} \int_{0}^{\infty} \frac{x_3\ln(x_3)}{(x_3^2+1)(z^ax_3^2-1)} \ dx_3 
\end{align*} 
Hence, using \eqref{Probability Axiom 1} we find that
\begin{equation}\label{J_3,a relation}
1  =\frac{a^3 \sin^{3}(\frac{\pi}{a})}{\pi^3} J_{3,a} + \frac{4a\sin^{3}(\frac{\pi}{a})}{\pi^3} \int_{0}^{\infty} \int_{0}^{\infty} \frac{x_3\ln(x_3)}{(x_3^2+1)(z^ax_3^2-1)} \ dx_3 \ dz.  
\end{equation}
We focus on evaluating the second term of \eqref{J_3,a relation}. Reversing the order of integration, and  making the substitution $z=vx_3^{-2/a},$ we see it is of the form in \eqref{Generalized Cauchy PV}. Thus, the second term is
\begin{equation} \label{2nd term}
-\frac{4\sin^{3}(\frac{\pi}{a}) \cot \left(\frac{\pi}{a} \right)}{\pi^2} \int_{0}^{\infty} \frac{x_3^{1-\frac{2}{a}}\ln(x_3)}{x_3^2+1} \ dx_3.
\end{equation}
Making the substitution, $x_3=u^\frac{a}{2a-2},$ we have \eqref{2nd term} is equal to
\begin{align*}
-\frac{a^2\sin^{3}(\frac{\pi}{a}) \cot \left(\frac{\pi}{a} \right)}{(a-1)^2\pi^2} \int_{0}^{\infty} \frac{\ln(u)}{u^{\frac{a}{a-1}} + 1} \ du
& = -\frac{a^2\sin^{3}(\frac{\pi}{a}) \cot \left(\frac{\pi}{a} \right)}{(a-1)^2\pi^2} \int_{0}^{\infty} \left(\frac{\ln(u)}{u^{\frac{a}{a-1}} + 1} -  \frac{2\ln(u)}{u^{\frac{2a}{a-1}} + 1} \right) \ du  \\
& = -\frac{a^2\sin^{3}(\frac{\pi}{a}) \cot \left(\frac{\pi}{a} \right)}{(a-1)^2\pi^2} \left(J_{2,\frac{a}{a-1}}-2J_{2,\frac{a}{a-1}} \right) \\
& = -\cos^2 \left(\frac{\pi}{a}\right).
\end{align*}
Now, we rearrange terms in \eqref{J_3,a relation} and use the fact $\cot(\theta)=\cos(\theta)/\sin(\theta)$ to see
\[ J_{3,a} = \frac{\pi^3}{a^3} \csc^{3} \left(\frac{\pi}{a}\right)+ \frac{\pi^3}{a^3} \cot^2 \left(\frac{\pi}{a}\right)\csc \left(\frac{\pi}{a}\right).\]
Thus, by Theorem \ref{S(k,a) J(k,a) Relation Theorem}, we have 
\[S(3,a) = \sum_{n=-\infty}^{\infty} \frac{(-1)^n}{(an+1)^3} 
= \frac{J_{3,a}}{(3-1)!} 
= \frac{\pi^3}{2a^3} \csc^{3} \left(\frac{\pi}{a}\right)+ \frac{\pi^3}{2a^3} \cot^{2} \left(\frac{\pi}{a}\right)\csc \left(\frac{\pi}{a}\right).\]
\end{proof}

\section{Uniform Random Variables}
We recall the Basel Problem solution by Beukers, Calabi and Kolk \cite{BCK} (see also \cite{NE,DK,silagadze2010}), whose generalization leads in to our analysis of the probability involving cyclically pairwise consecutive sums of $k$ independent uniform random variables on $(0,1).$    

\subsection{Beukers's, Calabi's, and Kolk's Basel Problem Solution}
We start with the double integral $D_2$ encountered in the introduction, which we modify as
\[D_2=\int_{0}^{1} \int_{0}^{1} \frac{1}{1-x^2_1x_2^2} \ dx_1 \ dx_2. \]
Rewriting the integrand as a geometric series, 
\begin{equation} \label{Beuker GS}
\frac{1}{1-x_1^2x_2^2} = \sum_{n=0}^{\infty}(x_1x_2)^{2n},
\end{equation}
and exchanging summation and integration, we see $D_2=S(2).$ 

Now, we modify Calabi's change of variables from \eqref{Calabi COV},
\[x_1=\frac{\sin\left(\frac{\pi}{2} u_1\right)}{\cos\left(\frac{\pi}{2} u_2\right)}, \quad x_2=\frac{\sin\left(\frac{\pi}{2} u_2\right)}{\cos\left(\frac{\pi}{2} u_1\right)}, \]
with Jacobian Determinant
\[\left | \frac{\partial(x_1,x_2)}{\partial (u_1,u_2)} \right | = \left (\frac{\pi}{2} \right)^2 \left( 1-x^2_1x^2_2 \right), \]
that transforms the integrand in $D_2$ into $\left(\pi/2 \right)^2.$ The resulting region of integration is the open isosceles right triangle with the shorter legs each having length $1.$ The area of this triangle is $1/2,$ so we recover $S(2)=\pi^2/8$ and $\zeta(2)=\pi^2/6.$

\subsection{From Integral to Polytope}

We now generalize the solution by Beukers, Calabi, and Kolk. We consider the multiple integral
\begin{equation}\label{Multi Integral 2}
\int_{0}^{1}\dots\int_{0}^{1} \frac{1}{1-(-1)^k\prod_{i=1}^{k} x^2_i} \ dx_1 \dots dx_k. 
\end{equation}
As before, rewriting the integrand of \eqref{Multi Integral 2} as a geometric series
\begin{equation} \label{Beuker Generalized GS}
\frac{1}{1-(-1)^k\prod_{i=1}^{k} x^2_i} = \sum_{n=0}^{\infty}(-1)^{nk} (x_1 \dots x_k)^{2n},
\end{equation}
exchanging summation and integration, and finally integrating term by term $k$ successive times yields \eqref{Multi Integral 2} is equal to $S(k).$ 

Then, we use the change of variables (modifying those in \cite{BCK,NE,silagadze2010})
\begin{equation}\label{COV}
x_i = \frac{\sin \left(\frac{\pi}{2}u_i \right)}{\cos \left( \frac{\pi}{2} u_{i+1} \right)}, \quad 1 \leq i \leq k,
\end{equation}
where the $u_i$ are indexed cyclically ${\rm mod\,} k,$ i.e. $u_{k+1}:=u_1.$ The Jacobian Determinant of \eqref{COV} is
\begin{equation}\label{Jacobian}
\left |\frac{\partial(x_1,\dots , x_k)}{\partial(u_1, \dots , u_k)} \right| = \frac{\pi^k}{2^k} \left(1-(-1)^k\prod_{i=1}^{k} x^2_i \right),
\end{equation}
and \eqref{COV} diffeomorphically maps the open cube $(0,1)^k$ into the open polytope
\[\Delta^k= \lbrace (u_1, \dots, u_k) \in \mathbb{R}^k : u_{i}+u_{i+1}<1, \ u_i>0, 1 \leq i \leq k \rbrace.\]
The proof of \eqref{Jacobian} is done both in \cite{BCK} and \cite{NE}. As a result,
\begin{equation}\label{S(k) Polytope Relation}
S(k)=\frac{\pi^k}{2^k}\text{Vol}(\Delta^k).
\end{equation}

Let $U_1, \dots , U_k$ be $k$ independent uniform random variables on $(0,1).$ Then, we see 
\begin{equation}\label{Volume Polytope}
\text{Vol}(\Delta^k)= \text{Pr}\left(U_1+U_2<1, \dots, U_{k-1} +U_{k}<1,U_{k}+U_{1}<1 \right).
\end{equation}
In the next section, we prove the formula
\[\textup{Vol}(\Delta^k)=\left( \frac{1}{2} \right)^k + \left(\frac{1}{2}\right)^k \sum_{n=1}^{\left \lfloor \frac{k}{2} \right \rfloor} \ \sum_{\substack{(r_1,\dots ,r_n) \in [k]^n :   
\\   \ |r_p-r_q| \notin \lbrace 0,1,k-1 \rbrace \\ p,q \in [n]}} \ \prod_{i=1}^{n} \frac{1}{i+\sum_{j=1}^{i} \alpha_{j} },\]
in which $[k]:= \lbrace 1, \dots k\rbrace,$ $\delta(a,b)$ is the Kronecker Delta Function
\[\delta(a,b) = \begin{cases}
1 & a=b \\
0 & \text{else}
\end{cases},\]
and $\alpha_j$ is defined
\[\alpha_j =2-\delta(k,2)-\sum_{m=1}^{j-1}  \delta(|r_m-r_j|, 2) + \delta(|r_m-r_j|, k-2).\]

\subsection{Polytope and Combinatorics}
For the sake of brevity, for $n \in \mathbb{N},$ we define 
\[[n] :=\lbrace 1, \dots , n \rbrace.\] 
We consider the probability
\begin{equation} \label{Polytope Probability}
\text{Pr}\left(U_1+U_2<1, \dots, U_{k-1} +U_{k}<1,U_{k}+U_{1}<1 \right),
\end{equation}
where $U_i$ is uniform on $(0,1)$ for each $i \in [k].$ Note that $f_{U_i}(u_i)=1$ for each $i \in [k]$ by \eqref{Uniform RV}. If $U_{r_1}, \dots, U_{r_n}$ are $n$ distinct random variables, we facilitate indexing by using cyclical convention: $U_{r_{n} + 1}:=U_{r_1}.$ We compute \eqref{Polytope Probability} by combining the probabilities of two cases: when all $U_i<1/2$ for each $i \in [k]$, and when $U_j \geq 1/2$ for some $j \in [k].$ The following theorem discusses the first case.

\begin{theorem} \label{3.3.1}
If $U_i<1/2$ for all $i \in [k],$ then \eqref{Polytope Probability} is equal to $\left(1/2\right)^k.$
\end{theorem}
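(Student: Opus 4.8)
The plan is to observe that the hypothesis $U_i < 1/2$ for every $i \in [k]$ is so strong that it forces all the cyclic consecutive sums to be below $1$ automatically, so that on this sub-region the polytope constraints impose nothing new. Concretely, I would first set $E := \{U_i < 1/2 \text{ for all } i \in [k]\}$ and note that on $E$ we have, for each $i \in [k]$ (with the cyclic convention $U_{k+1} := U_1$),
\[
U_i + U_{i+1} < \tfrac{1}{2} + \tfrac{1}{2} = 1.
\]
Hence $E \subseteq \{U_1 + U_2 < 1, \dots, U_{k-1} + U_k < 1, U_k + U_1 < 1\}$, so the contribution of $E$ to the probability in \eqref{Polytope Probability} is simply $\mathrm{Pr}(E)$.

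It then remains to compute $\mathrm{Pr}(E)$ directly. Since $U_1, \dots, U_k$ are independent with $f_{U_i}(u_i) = 1$ on $(0,1)$ by \eqref{Uniform RV}, formulas \eqref{Probability Axiom 2} and \eqref{Joint PDF Independence} give
\[
\mathrm{Pr}(E) = \int_{0}^{1/2} \cdots \int_{0}^{1/2} 1 \ du_1 \cdots du_k = \prod_{i=1}^{k} \int_{0}^{1/2} du_i = \left( \frac{1}{2} \right)^{k},
\]
which is the asserted value.

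I do not expect any real obstacle here: the only points requiring a little care are the cyclic indexing (making sure the wrap-around sum $U_k + U_1$ is included and is likewise $< 1$ on $E$) and the bookkeeping reminder that this lemma only evaluates the part of \eqref{Polytope Probability} lying in $\{U_i < 1/2 \text{ for all } i\}$, to be combined later with the complementary case $U_j \geq 1/2$ for some $j$. The genuine combinatorial work of the section resides in that complementary case, not in this statement.
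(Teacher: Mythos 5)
Your proof is correct and follows the same route as the paper: both observe that the hypothesis $U_i<1/2$ for all $i$ already forces every cyclic sum $U_i+U_{i+1}<1$, and then compute the resulting probability as $\int_0^{1/2}\cdots\int_0^{1/2}1\,du_1\cdots du_k=(1/2)^k$. Your additional remarks about the containment of events and about this being one half of a case split match the intent of the surrounding section.
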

\begin{proof}
For each $i \in [k],$ we have 
\[U_{i}+U_{i+1}< \frac{1}{2}+\frac{1}{2}=1.\]
Thus, \eqref{Polytope Probability} is the integral
\[\int_{0}^{\frac{1}{2}} \dots \int_{0}^{\frac{1}{2}}  \ du_{1} \dots \ du_{k}=\left(\frac{1}{2}\right)^k.\]
\end{proof}

\begin{remark*}
This probability geometrically represents the volume of the cube $\left(0,1/2\right)^k.$
\end{remark*}
The second case, namely when $U_j \geq 1/2,$ for some $j \in [k],$ is nontrivial. We discuss which random variables can be simultaneously at least $1/2.$ 

\begin{theorem}\label{3.3.2}
Suppose $U_{i}+U_{i+1}<1$ for all $i \in [k].$ Let there be distinct indices $r_1, \dots ,r_n \in [k]$ such that $U_{r_1}, \dots, U_{r_n} \geq 1/2.$ Then, we have that $r_1, \dots, r_n$ are cyclically pairwise nonconsecutive. That is, for all distinct $p,q \in [n],$ we have 
$|r_p-r_q| \notin \lbrace 1,k-1 \rbrace.$ 
\end{theorem}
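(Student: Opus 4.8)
The plan is a direct proof by contradiction, with essentially all the content lying in one inequality. First I would make explicit the translation between the index condition and the hypothesis: the assumption ``$U_i+U_{i+1}<1$ for all $i\in[k]$'' together with the cyclic convention $u_{k+1}:=u_1$ is precisely the assertion that $U_a+U_b<1$ for every pair of distinct indices $a,b\in[k]$ that are \emph{cyclically consecutive}, and a pair $\{a,b\}\subseteq[k]$ of distinct indices is cyclically consecutive exactly when $|a-b|\in\{1,k-1\}$ (the value $k-1$ accounting for the wrap-around pair $\{1,k\}$). Thus it suffices to show that no two of $r_1,\dots,r_n$ are cyclically consecutive.

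Next I would argue by contradiction: suppose there are distinct $p,q\in[n]$ with $|r_p-r_q|\in\{1,k-1\}$. By the observation above, $r_p$ and $r_q$ are cyclically consecutive, so after possibly swapping $p$ and $q$ we may assume $r_q\equiv r_p+1\pmod k$, and the hypothesis then gives $U_{r_p}+U_{r_q}<1$. On the other hand, $r_p$ and $r_q$ are among the indices at which the variable is at least $1/2$, so $U_{r_p}\ge 1/2$ and $U_{r_q}\ge 1/2$, whence $U_{r_p}+U_{r_q}\ge 1$ — a contradiction. Hence every pair $r_p,r_q$ with $p\neq q$ satisfies $|r_p-r_q|\notin\{1,k-1\}$, which is the claim.

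I expect no real obstacle here; the only point needing a word of care is the degenerate case $k=2$, where $\{1,k-1\}$ collapses to $\{1\}$ and the constraint merely forbids $U_1$ and $U_2$ from being simultaneously $\ge 1/2$, forcing $n\le 1$ so that the conclusion holds vacuously. For $k\ge 3$ the two forbidden differences $1$ and $k-1$ are genuinely distinct and the argument above goes through verbatim. For this reason I would state the ``$|r_p-r_q|\in\{1,k-1\}$ $\Longleftrightarrow$ cyclically consecutive'' identification up front, since it is the only bookkeeping step; the inequality itself ($1/2+1/2=1$ contradicts the strict $<1$) is immediate.
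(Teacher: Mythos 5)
Your proof is correct and follows essentially the same contradiction argument as the paper: both hinge on the observation that if $U_{r_p}, U_{r_q} \geq 1/2$ for cyclically consecutive indices, then $U_{r_p} + U_{r_q} \geq 1$, violating the hypothesis. The paper handles $|r_p - r_q| = 1$ and $|r_p - r_q| = k-1$ as two separate cases while you unify them via the ``cyclically consecutive'' identification, but this is a cosmetic reorganization rather than a different route.
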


\begin{proof}
Suppose on the contrary we had two distinct numbers $p,q \in [n]$ with $|r_p-r_q| \in \lbrace 1, k-1 \rbrace$ such that $U_{r_p},U_{r_q} \geq 1/2.$ 

If $|r_p-r_q|=1,$ we must have $r_p$ and $r_q$ consecutive to one another. Hence $U_{r_p}+U_{r_q}<1,$ which is a contradiction.

If $|r_p-r_q|=k-1,$ we must have $r_p=1$ and $r_q=k,$ or vice versa. But in either case, we see $U_{1}+U_{k}<1,$ which is a contradiction.
\end{proof}

Now, we find the maximal $n$ that would satisfy the previous theorem. 

\begin{theorem} \label{3.3.3}
Reconsider the hypotheses from Theorem \ref{3.3.2}. Then we have $n \leq \lfloor k/2 \rfloor.$
\end{theorem}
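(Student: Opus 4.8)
The claim is that if $r_1,\dots,r_n \in [k]$ are distinct indices with $U_{r_p},U_{r_q} \ge 1/2$ forbidden to be cyclically consecutive (i.e.\ $|r_p - r_q| \notin \{1,k-1\}$ for all distinct $p,q$), then at most $\lfloor k/2 \rfloor$ such indices can exist. My plan is to phrase this purely combinatorially: the indices $r_1,\dots,r_n$ form an independent set in the cycle graph $C_k$ on vertex set $[k]$ with edges joining cyclically consecutive vertices, and the independence number of $C_k$ is exactly $\lfloor k/2 \rfloor$. So the whole content reduces to a short counting argument for that independence number.

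First I would set up the cyclic structure explicitly: think of the $k$ indices $1,2,\dots,k$ arranged around a circle, with $i$ adjacent to $i+1$ for $1 \le i \le k-1$ and $k$ adjacent to $1$. Theorem \ref{3.3.2} tells us the selected indices $r_1,\dots,r_n$ are pairwise non-adjacent in this circle. Second, I would order the selected indices cyclically, say $r_{(1)} < r_{(2)} < \dots < r_{(n)}$, and consider the $n$ arcs of the circle strictly between consecutive selected indices (including the wrap-around arc from $r_{(n)}$ back to $r_{(1)}$). Non-adjacency means each such arc contains at least one unselected index, hence has length at least $1$; together with the $n$ selected indices this gives $k \ge n + n = 2n$, so $n \le k/2$, and since $n$ is an integer, $n \le \lfloor k/2 \rfloor$.

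The one genuinely delicate point is the wrap-around / small-$k$ edge cases, which is where I expect to spend the most care. When $k = 2$ the two vertices $1$ and $2$ are "doubly adjacent" ($|1-2| = 1$ and also $|1-2| = k-1$), so $n \le 1 = \lfloor 2/2 \rfloor$ holds but the generic arc-counting argument needs to be read correctly (each of the two arcs between the single pair of selected vertices degenerates). Similarly for $k=3$, any two distinct indices are cyclically consecutive, forcing $n \le 1 = \lfloor 3/2 \rfloor$. For $k \ge 4$ the arc argument runs cleanly. I would therefore either dispatch $k = 2, 3$ by direct inspection and run the arc-counting argument for $k \ge 4$, or — more cleanly — just observe that the $n$ "gap" indices (one unselected index immediately following each selected index in cyclic order) are all distinct and disjoint from the $n$ selected indices, which immediately yields $2n \le k$ in all cases $k \ge 2$ without any case split. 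That second framing is the one I would write up, since it avoids the edge-case bookkeeping entirely.
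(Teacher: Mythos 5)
Your proof is correct and rests on the same counting idea as the paper's: between cyclically nonconsecutive selected indices there must be at least one unselected index, giving $2n \le k$ and hence $n \le \lfloor k/2 \rfloor$. The paper presents this as a proof by contradiction with a separate parity case for $k$ odd (forcing $2n=k$ and then noting $k$ odd makes that impossible), whereas your ``successor gap'' formulation --- pairing each selected index $r$ with the distinct unselected index $r+1 \pmod k$ --- is a direct argument that handles all $k\ge 2$, including $k=2,3$, with no case split; this is a modest but genuine streamlining of the same argument.
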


\begin{proof}
Suppose $k$ is even and $n > \lfloor k/2 \rfloor=k/2.$ Let $\sigma$ be an increasing permutation map making $\sigma(r_1) < \dots < \sigma(r_n).$ Note that $\sigma(r_1), \dots , \sigma(r_n)$ are cyclically pairwise nonconsecutive. Hence, for all $j \in [n-1],$ there is an integer $x_j \in [k]$ such that $\sigma(r_j)<x_j<\sigma(r_{j+1}).$ There is also an $x_n \in [k]$ such that either $x_n>\sigma(r_{n})$ or $x_n<\sigma(r_{1}).$ In all we have enumerated at least $2n$ distinct integers in the set $[k].$ Since $n>k/2,$ we see $2n>k.$ Hence, we have enumerated more than $k$ integers in $[k],$ which is impossible. 

Now, suppose $k$ is odd and $n > \lfloor k/2 \rfloor=(k-1)/2.$ Construct an increasing permutation map $\sigma$ as before. This would result in us enumerating more than $k-1$ distinct integers in the set $[k].$ Thus, we must have $2n=k,$ so $2|k.$ This is a contradiction to $k$ being odd.    
\end{proof}

\begin{theorem}\label{3.3.4}
Let $y_1, \dots ,y_n \in \mathbb{R}$ such that $1/2 \leq y_n \leq ... \leq y_1 < 1,$ and let $\beta_1, \dots ,\beta_n$ be nonnegative integers. Define the function 
\begin{equation}\label{polynomial}
\psi(y_1,\dots , y_n) = \prod_{i=1}^{n}(1-y_i)^{\beta_i}. 
\end{equation}
Then the following integral formula holds 
\begin{equation} \label{integral formula}
\int_{\frac{1}{2}}^{1} \int_{\frac{1}{2}}^{y_1} \dots \int_{\frac{1}{2}}^{y_{n-1}} \psi(y_1,\dots , y_n) \ dy_n \dots dy_1= \left( \frac{1}{2}\right) ^{n+\sum_{j=1}^{n} \beta_j} \prod_{i=1}^{n} \frac{1}{i+\sum_{j=1}^{i} \beta_{j}}.
\end{equation}
\end{theorem}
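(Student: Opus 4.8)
The plan is to prove \eqref{integral formula} by induction on $n$, peeling off the innermost integral each time. The base case $n=1$ amounts to computing $\int_{1/2}^{1}(1-y_1)^{\beta_1}\,dy_1$, which is the elementary power-rule integral equal to $(1-y_1)^{\beta_1+1}/(\beta_1+1)$ evaluated between the limits; since $(1-1)^{\beta_1+1}=0$ and $(1-1/2)^{\beta_1+1}=(1/2)^{\beta_1+1}$, this gives $\left(\tfrac12\right)^{1+\beta_1}\cdot\tfrac{1}{1+\beta_1}$, which matches the right-hand side with $n=1$.

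For the inductive step, I would fix the outer variable $y_1$ and look at the inner $(n-1)$-fold integral $\int_{1/2}^{y_1}\int_{1/2}^{y_2}\cdots\int_{1/2}^{y_{n-1}}\prod_{i=2}^{n}(1-y_i)^{\beta_i}\,dy_n\cdots dy_2$. The obstacle is that the induction hypothesis as stated in \eqref{integral formula} has outer limit exactly $1$, not a variable $y_1$, so I cannot apply it directly. The fix is to prove a slightly more general statement first: for any $t\in[1/2,1]$,
\[
\int_{\frac12}^{t}\int_{\frac12}^{y_1}\cdots\int_{\frac12}^{y_{n-1}}\prod_{i=1}^{n}(1-y_i)^{\beta_i}\,dy_n\cdots dy_1
= (1-t)^{\,?}\cdots,
\]
but in fact the cleanest route is to observe that after substituting $y_i\mapsto 1-y_i$ the nested region $1/2\le y_n\le\cdots\le y_1\le t$ becomes $0\le y_1'\le\cdots\le y_n'\le 1/2$ with $t$ contributing a lower cutoff $y_1'\ge 1-t$; then the integrand is the monomial $\prod (y_i')^{\beta_i}$ over an ordered simplex. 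So I would instead carry the induction through with the innermost integration: integrating $\int_{1/2}^{y_{n-1}}(1-y_n)^{\beta_n}\,dy_n = \tfrac{1}{\beta_n+1}\bigl[(1/2)^{\beta_n+1}-(1-y_{n-1})^{\beta_n+1}\bigr]$. This splits the $n$-fold integral into two $(n-1)$-fold integrals: one with integrand $\prod_{i=1}^{n-1}(1-y_i)^{\beta_i}$ times the constant $(1/2)^{\beta_n+1}/(\beta_n+1)$, and one with integrand $\prod_{i=1}^{n-1}(1-y_i)^{\beta_i}\cdot(1-y_{n-1})^{\beta_n+1}/(\beta_n+1)$, i.e.\ the same shape but with $\beta_{n-1}$ replaced by $\beta_{n-1}+\beta_n+1$. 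Both are instances covered by the induction hypothesis at level $n-1$.

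The heart of the argument is then purely algebraic: one must check that
\[
\frac{(1/2)^{\beta_n+1}}{\beta_n+1}\,P(\beta_1,\dots,\beta_{n-1})
-\frac{1}{\beta_n+1}\,P(\beta_1,\dots,\beta_{n-2},\beta_{n-1}+\beta_n+1)
\]
equals the claimed closed form $\left(\tfrac12\right)^{n+\sum_{j=1}^n\beta_j}\prod_{i=1}^n\tfrac{1}{i+\sum_{j=1}^i\beta_j}$, where $P(\gamma_1,\dots,\gamma_{n-1})=\left(\tfrac12\right)^{(n-1)+\sum\gamma_j}\prod_{i=1}^{n-1}\tfrac{1}{i+\sum_{j=1}^i\gamma_j}$ denotes the value given by the induction hypothesis. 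The partial sums $\sum_{j=1}^i\gamma_j$ for $i\le n-2$ are unchanged between the two $P$-terms and coincide with $\sum_{j=1}^i\beta_j$, so the first $n-2$ factors of the product match immediately; the power of $1/2$ in the first term is $(1/2)^{(n-1)+\sum_{j=1}^{n-1}\beta_j}\cdot(1/2)^{\beta_n+1}=(1/2)^{n+\sum_{j=1}^{n}\beta_j}$, the desired exponent, while the second term has the smaller exponent $(1/2)^{(n-1)+\sum_{j=1}^{n}\beta_j+1}$ — wait, these exponents actually agree — so after factoring out $(1/2)^{n+\sum_{j=1}^n\beta_j}\prod_{i=1}^{n-2}\tfrac{1}{i+\sum_{j=1}^i\beta_j}$ one is left to verify the scalar identity
\[
\frac{1}{(\beta_n+1)\bigl(n-1+\sum_{j=1}^{n-1}\beta_j\bigr)}
-\frac{1}{(\beta_n+1)\bigl(n-1+\sum_{j=1}^{n}\beta_j+1\bigr)}
=\frac{1}{\bigl(n-1+\sum_{j=1}^{n-1}\beta_j\bigr)\bigl(n+\sum_{j=1}^{n}\beta_j\bigr)},
\]
which is the telescoping relation $\tfrac1{(b+1)A}-\tfrac1{(b+1)(A+b+1)}=\tfrac1{A(A+b+1)}$ with $A=n-1+\sum_{j=1}^{n-1}\beta_j$ and $b=\beta_n$. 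This last step is routine once set up; the only genuine care needed is bookkeeping the index shifts $\beta_{n-1}\rightsquigarrow\beta_{n-1}+\beta_n+1$ correctly and confirming the induction hypothesis applies verbatim to both pieces.
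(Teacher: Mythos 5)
Your proof is correct, but it follows a genuinely different route from the paper's. The paper first substitutes $y_j = 1 - t_j$, which turns the integrand into a monomial $\prod t_i^{\beta_i}$ and the region into the simplex $0<t_1<\cdots<t_n<1/2$; it then reverses the order of integration so that each successive antiderivative is a \emph{single} monomial in the next variable, and the product $\prod_{i=1}^n\tfrac{1}{i+\sum_{j\le i}\beta_j}$ accumulates one factor at a time with no splitting. Your argument instead attacks the original nested integral head-on: integrating out $y_n$ produces \emph{two} terms (one from each endpoint), yielding two $(n-1)$-fold integrals of the same shape — one with exponent vector $(\beta_1,\dots,\beta_{n-1})$ and one with $(\beta_1,\dots,\beta_{n-2},\beta_{n-1}+\beta_n+1)$ — which you combine via the telescoping identity $\tfrac{1}{(b+1)A}-\tfrac{1}{(b+1)(A+b+1)}=\tfrac{1}{A(A+b+1)}$. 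Your algebra checks out: the exponents of $1/2$ from the two pieces do agree (both equal $n+\sum\beta_j$), the first $n-2$ factors in the product coincide, and the residual scalar identity is exactly the one you state with $A = n-1+\sum_{j=1}^{n-1}\beta_j$, $b=\beta_n$. The tradeoff is that the paper's change of variables plus Tonelli reversal avoids the two-term split entirely and makes the induction mechanical, whereas your approach is more elementary (no order-reversal needed) but requires the careful bookkeeping of the shifted exponent $\beta_{n-1}\mapsto\beta_{n-1}+\beta_n+1$, which you handle correctly. One small remark: your paragraph briefly starts down the paper's substitution route before pivoting to the direct computation; the final proposal you settle on is sound as written.
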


\begin{proof}
We first make the change of variables $y_j=1-t_j, j \in [n],$ which transforms the left hand side of \eqref{integral formula} into 
\begin{align}
 \int_{0}^{\frac{1}{2}} \int_{t_1}^{\frac{1}{2}} \dots \int_{t_{n-1}}^{\frac{1}{2}} \prod_{i=1}^{n} t_i^{\beta_i} \ dt_n \dots \ dt_1 & =  \int_{0}^{\frac{1}{2}} \int_{0}^{t_n} \dots \int_{0}^{t_2} \prod_{i=1}^{n} t_i^{\beta_i} \ dt_1 \dots \ dt_n \label{Reverse Order} \\
 & = \left( \frac{1}{2}\right) ^{n+\sum_{j=1}^{n} \beta_j} \frac{1}{1+\beta_1} \frac{1}{2+\beta_1+\beta_2} \dots \frac{1}{n+\sum_{j=1}^{n}\beta_j} \label{Induction Argument} \\ 
 & = \left( \frac{1}{2}\right) ^{n+\sum_{j=1}^{n} \beta_j} \prod_{i=1}^{n} \frac{1}{i+\sum_{j=1}^{i} \beta_{j}} \nonumber.
\end{align}
The first equality of \eqref{Reverse Order} follows from reversing the order of integration, and \eqref{Induction Argument} follows upon induction on $n.$  

\end{proof}

We discuss the integral bounds for the multiple integral corresponding to the probability in \eqref{Polytope Probability}.

\begin{theorem}\label{3.3.5}
Reconsider the hypotheses from Theorem \ref{3.3.2}, but this time, suppose further that $1/2 \leq U_{r_n} \leq \dots \leq U_{r_1} <1.$ For each $j \in [n],$ let $\alpha_j$ be the number of $x \in \lbrace r_j \pm 1 \rbrace$ such that $U_x<1-U_{r_j}\leq 1-U_{r_l}$ holds for all $l \in [n] \setminus \lbrace j \rbrace.$ Then
\begin{equation}\label{exponents}
\alpha_j= 2-\delta(k,2)-\sum_{m=1}^{j-1} \delta(|r_m-r_j|, 2) + \delta(|r_m-r_j|, k-2).  
\end{equation} 
\end{theorem}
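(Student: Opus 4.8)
The plan is to unwind the definition of $\alpha_j$ into a purely combinatorial count on the cycle $\mathbb{Z}/k\mathbb{Z}$ and then evaluate that count directly. First I would record the facts we may use. By Theorem~\ref{3.3.2} the indices $r_1,\dots,r_n$ are cyclically pairwise nonconsecutive, so for each $j$ the two cyclic neighbours $r_j-1$ and $r_j+1$ (reduced mod $k$ into $[k]$) do not belong to $\{r_1,\dots,r_n\}$; in particular every $x\in\{r_j\pm1\}$ is one of the ``small'' coordinates. Since the values $U_{r_1},\dots,U_{r_n}$ coincide only on a null set (and the theorem is used only under an integral sign), I may assume $1/2\le U_{r_n}<\dots<U_{r_1}<1$, so that $1-U_{r_p}\le 1-U_{r_q}$ is equivalent to $p\le q$. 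The inequality $U_x<1-U_{r_j}$ appearing in the definition is automatic from the standing hypothesis $U_x+U_{r_j}<1$, so the only effective requirement on $x$ is that $1-U_{r_j}$ be the least of the bounds $1-U_{r_l}$ ranging over those $l\ne j$ for which $r_l$ is cyclically adjacent to $x$ (for the remaining $l$ there is nothing linking $U_x$ and $U_{r_l}$; this is the only reading of the quantifier consistent with the stated formula). Combining these, $\alpha_j$ counts exactly the $x\in\{r_j-1,r_j+1\}$ for which $j=\min\{\,l\in[n]:r_l\text{ is cyclically adjacent to }x\,\}$.

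Next I would perform the count. The cyclic neighbours of $x=r_j+1$ are $r_j$ and $r_j+2$, and those of $x=r_j-1$ are $r_j$ and $r_j-2$; in each case $r_j$ itself is one of the chosen indices adjacent to $x$. Hence $x$ is counted by $\alpha_j$ unless its ``far'' neighbour — $r_j+2$, respectively $r_j-2$, taken mod $k$ — is itself some $r_m$ with $m<j$. So I start the count at the number of \emph{distinct} cyclic neighbours of $r_j$, which is $2$ when $k\ge3$ and $1$ when $k=2$ (the collapse $r_j-1\equiv r_j+1$); this is the term $2-\delta(k,2)$. Then I subtract, for each $m<j$, the number of cyclic neighbours of $r_j$ whose far neighbour equals $r_m$.

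The heart of the argument is translating ``$r_m$ is the far neighbour of a neighbour of $r_j$'' into a statement about $|r_m-r_j|$. The far neighbour of $r_j+1$ equals $r_m$ precisely when $r_m\equiv r_j+2\pmod k$, i.e. $r_m-r_j=2$ or $r_m-r_j=2-k$; the far neighbour of $r_j-1$ equals $r_m$ precisely when $r_m\equiv r_j-2\pmod k$, i.e. $r_m-r_j=-2$ or $r_m-r_j=k-2$. In terms of absolute values, $r_m$ is the far neighbour of exactly one of $r_j\pm1$ when $|r_m-r_j|=2$, and of exactly one when $|r_m-r_j|=k-2$, except in the coincidental case $k=4$ (where $r_j+2\equiv r_j-2$), in which it is the far neighbour of both; this is consistent because there $|r_m-r_j|=2=k-2$ makes $\delta(|r_m-r_j|,2)+\delta(|r_m-r_j|,k-2)=2$. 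Thus for every $m\ne j$ the number of cyclic neighbours of $r_j$ killed by $m$ is exactly $\delta(|r_m-r_j|,2)+\delta(|r_m-r_j|,k-2)$, and since each neighbour $x$ of $r_j$ has a single far neighbour it is killed by at most one $m$ (with the $k=4$ coincidence absorbed as noted). Summing over $m=1,\dots,j-1$ gives
\[
\alpha_j = 2-\delta(k,2) - \sum_{m=1}^{j-1}\bigl(\delta(|r_m-r_j|,2)+\delta(|r_m-r_j|,k-2)\bigr),
\]
which is the asserted identity.

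I expect the main obstacle to be the modular bookkeeping in the last step: one must cleanly separate the ``no wrap-around'' cases $r_m-r_j=\pm2$ from the ``wrap-around'' cases $|r_m-r_j|=k-2$, treat the small-cycle coincidences $k=2$ (two neighbours collapse, handled by $\delta(k,2)$), $k=3$ (here $k-2=1$ cannot occur, by nonconsecutiveness), and $k=4$ (the two far neighbours collapse, handled by the $\delta$-sum being $2$), and verify that a chosen index cannot be simultaneously a far neighbour of a neighbour of $r_j$ \emph{and} equal to $r_j\pm1$ itself — all of which follow from $r_1,\dots,r_n$ being cyclically pairwise nonconsecutive, but must be checked so that no neighbour is subtracted twice and none is omitted.
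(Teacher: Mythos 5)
Your proposal is correct and takes essentially the same approach as the paper: both start from the $2-\delta(k,2)$ distinct cyclic neighbours of $r_j$ and then subtract, for each $m<j$, the number of those neighbours that are also cyclically adjacent to $r_m$, identifying this count with $\delta(|r_m-r_j|,2)+\delta(|r_m-r_j|,k-2)$. The differences are presentational only --- the paper treats $k=2$ and $j=1$ as separate base cases and phrases the subtraction via the intersections $\{r_j\pm1\}\cap\{r_m\pm1\}$, whereas you fold those cases into a single count via far neighbours and make explicit the uniqueness-of-far-neighbour and $k=4$ coincidence points that the paper leaves tacit.
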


\begin{proof}
Suppose $k=2.$ Then by Theorem \ref{3.3.3}, exactly one of $U_1,U_2$ must be at least $1/2,$ so $\alpha_1=1$ in each case. This agrees with the right hand side of \eqref{exponents}, since the summation term vanishes.

Let $k>2.$ For $j=1,$ since $U_{r_1}$ is the greatest random variable, we have $1-U_{r_1} \leq 1-U_{r_l}$ for each $l \in [n] \setminus \lbrace 1 \rbrace.$ So $U_{r_1-1}, U_{r_1+1} < 1-U_{r_1},$ which means $\alpha_1=2.$ 
For $j>1,$ we observe if $\lbrace r_j \pm 1 \rbrace \cap \lbrace r_m \pm 1 \rbrace$ is nonempty for some $m\in [j-1],$ then $|r_m-r_j| \in \lbrace 2, k-2 \rbrace.$ If $x \in \lbrace r_j \pm 1 \rbrace \cap \lbrace r_m \pm 1 \rbrace,$ then $U_x< 1-U_{r_m} \leq 1-U_{r_j}$ since $U_{r_j}\leq U_{r_m}.$ We count the number of $x \in \lbrace r_j \pm 1 \rbrace \cap \lbrace r_m \pm 1 \rbrace$ for some $m \in [j-1]$ by
\[\sum_{m=1}^{j-1} \delta(|r_m-r_j|, 2) + \delta(|r_m-r_j|, k-2).\] Hence, we must have 
\[\alpha_j=2-\sum_{m=1}^{j-1} \delta(|r_m-r_j|, 2) + \delta(|r_m-r_j|, k-2).\] This agrees with the right hand side of \eqref{exponents} since $\delta(k,2)=0$ for $k > 2.$
\end{proof}

\begin{theorem}\label{3.3.6}
Reconsider the hypotheses from Theorem \ref{3.3.5}. Then the probability expressed by \eqref{Polytope Probability} is
\begin{equation} \label{big probability}
\left( \frac{1}{2}\right) ^{k} \prod_{i=1}^{n} \frac{1}{i+\sum_{j=1}^{i}\alpha_{j}},
\end{equation}
with $\alpha_j$ defined as in \eqref{exponents}.
\end{theorem}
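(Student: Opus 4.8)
The plan is to evaluate the $k$-fold integral of the constant $1$ over the region carved out by the hypotheses of Theorem~\ref{3.3.5} --- the region in which the exceedance set $\{i:U_i\ge1/2\}$ is exactly $\{r_1,\dots,r_n\}$, these satisfy $1/2\le U_{r_n}\le\dots\le U_{r_1}<1$, and $U_i+U_{i+1}<1$ for every $i$ --- by integrating out the variables in a carefully chosen order. First I would split the $k-n$ indices outside $\{r_1,\dots,r_n\}$ into the \emph{neighbours} (those of the form $r_j\pm1$ for some $j$; by Theorem~\ref{3.3.2} these lie outside $\{r_1,\dots,r_n\}$) and the \emph{free} indices (the rest).

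The free variables are the easy part: if $x$ is free then $x-1,x,x+1\notin\{r_1,\dots,r_n\}$, so $U_{x-1},U_x,U_{x+1}<1/2$ and the constraints $U_{x-1}+U_x<1$ and $U_x+U_{x+1}<1$ hold automatically; hence $U_x$ ranges freely over $(0,1/2)$ and contributes a factor $1/2$. The neighbours require the bookkeeping supplied by Theorem~\ref{3.3.5}: a neighbour $x$ appears only in the constraints $U_x<1-U_{r_j}$ over the $r_j$ adjacent to $x$ (the bound $U_x<1/2$ being automatically implied, since $1-U_{r_j}\le1/2$), so its binding bound is $U_x<1-U_{r_{j(x)}}$ where $U_{r_{j(x)}}$ is the largest of the $U_{r_j}$ with $r_j$ adjacent to $x$. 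By Theorem~\ref{3.3.5} each $r_j$ is the binding index of exactly $\alpha_j$ neighbours, and since every neighbour has a unique binding index the $\alpha_j$ partition the neighbour set; therefore there are $\sum_{j=1}^n\alpha_j$ neighbours and $k-n-\sum_{j=1}^n\alpha_j$ free indices.

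Next I would integrate out the free variables, which contributes $(1/2)^{\,k-n-\sum_j\alpha_j}$, and then the neighbours: for each of the $\alpha_j$ neighbours bound to $r_j$ we get $\int_0^{1-U_{r_j}}dU_x=1-U_{r_j}$, for a total factor $\prod_{j=1}^n(1-U_{r_j})^{\alpha_j}$. What remains is
\[
\Bigl(\tfrac12\Bigr)^{\,k-n-\sum_j\alpha_j}\int_{\frac12}^{1}\int_{\frac12}^{U_{r_1}}\dots\int_{\frac12}^{U_{r_{n-1}}}\prod_{i=1}^n(1-U_{r_i})^{\alpha_i}\,dU_{r_n}\dots dU_{r_1},
\]
which is exactly the left-hand side of \eqref{integral formula} with $y_i=U_{r_i}$ and $\beta_i=\alpha_i$. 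Applying Theorem~\ref{3.3.4} turns the integral into $(1/2)^{\,n+\sum_j\alpha_j}\prod_{i=1}^n\bigl(i+\sum_{j=1}^i\alpha_j\bigr)^{-1}$; multiplying by the prefactor collapses the two powers of $1/2$ into $(1/2)^k$, which is precisely \eqref{big probability}. All reorderings of the iterated integral are legitimate by Tonelli, the integrand being the nonnegative constant $1$.

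I expect the only real obstacle to be the middle step: correctly matching the combinatorial content of Theorem~\ref{3.3.5} to the integration --- in particular, checking that the $\alpha_j$ genuinely partition the neighbours (so the exponents in $\prod_j(1-U_{r_j})^{\alpha_j}$ are neither over- nor under-counted) and that the bound $U_x<1/2$ on each neighbour is always subsumed by one of the bounds $U_x<1-U_{r_j}\le1/2$. Once these are settled, the remainder is a single invocation of Theorem~\ref{3.3.4}.
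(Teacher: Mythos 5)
Your proposal is correct and follows essentially the same route as the paper: identify the bounds on the three groups of variables (the ordered exceedance variables $U_{r_j}$, those bounded above by some $1-U_{r_j}$, and the remainder bounded above by $1/2$), integrate out the latter two groups to obtain the nested integral of $\prod_i(1-U_{r_i})^{\alpha_i}$, and invoke Theorem~\ref{3.3.4}. Your version is slightly more explicit in justifying the counts — distinguishing ``free'' from ``neighbour'' indices, arguing that each neighbour has a unique binding $r_j$ so the $\alpha_j$ partition the neighbour set, and noting that $U_x<1/2$ is subsumed by $U_x<1-U_{r_j}$ — but these are exactly the facts the paper's proof of Theorem~\ref{3.3.5} establishes and then uses implicitly in the proof of Theorem~\ref{3.3.6}.
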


\begin{proof}

We have from our hypothesis the bounds
\begin{equation}\label{Bounds}
\begin{cases}
\frac{1}{2} \leq U_{r_j} < 1  & j=1 \\
\frac{1}{2} \leq U_{r_j} \leq U_{r_{j-1}} & j \in [n] \setminus \lbrace 1 \rbrace. 
\end{cases}
\end{equation}
For each $j \in [n],$ there are $\alpha_j$ bounds of the form $0<U_{x}<1-U_{r_j},$ with $\alpha_j$ constructed in \eqref{exponents}. As a result, there are $k-n-\sum_{j=1}^{n} \alpha_j$ remaining bounds of the form $0<U_{x}<1/2.$

We now set up the integral corresponding to the probability \eqref{Polytope Probability}. We integrate first with respect to all $u_x$ with $0<U_{x}<1/2.$ Then, we integrate with respect to all $u_x$ in which $0<U_{x}<1-U_{r_j}$ for some $j \in [n].$ This results in \eqref{Polytope Probability} simplifying down to
\begin{equation} \label{Prob U_x >1/2}
\int_{\frac{1}{2}}^{1} \int_{\frac{1}{2}}^{u_{r_1}} \dots \int_{\frac{1}{2}}^{u_{r_{n-1}}} \left(\frac{1}{2}\right)^{k-n-\sum_{j=1}^{n} \alpha_j}\phi(u_{r_1}, \dots, u_{r_n}) \ du_{r_n} \dots \ du_{r_1},
\end{equation}
where 
\[\phi(u_{r_1}, \dots, u_{r_n})= \prod_{i=1}^{n}(1-u_{r_i})^{\alpha_i}.\]
Recognizing $\phi$ is a function of the form in \eqref{polynomial}, we use Theorem \ref{3.3.4} to see \eqref{Prob U_x >1/2} is 
\[\left( \frac{1}{2}\right) ^{k} \prod_{i=1}^{n} \frac{1}{i+\sum_{j=1}^{i} \alpha_j }. \]

\end{proof}

We now can prove the capstone results of our combinatorial analysis.

\begin{theorem} \label{3.3.7}
We have 
\begin{equation}\label{Volume Closed Form}
\textup{Vol}(\Delta^k)=\left( \frac{1}{2} \right)^k + \left(\frac{1}{2}\right)^k \sum_{n=1}^{\left \lfloor \frac{k}{2} \right \rfloor} \ \sum_{\substack{(r_1,\dots ,r_n) \in [k]^n :   
\\   \ |r_p-r_q| \notin \lbrace 0,1,k-1 \rbrace \\ p,q \in [n]}} \ \prod_{i=1}^{n} \frac{1}{i+\sum_{j=1}^{i} \alpha_{j} },
\end{equation}
\begin{equation} \label{S(k) Closed Form}
S(k) =\left( \frac{\pi}{4} \right)^k + \left(\frac{\pi}{4}\right)^k \sum_{n=1}^{\left \lfloor \frac{k}{2} \right \rfloor} \ \sum_{\substack{(r_1,\dots ,r_n) \in [k]^n :   
\\   \ |r_p-r_q| \notin \lbrace 0,1,k-1 \rbrace \\ p,q \in [n]}} \ \prod_{i=1}^{n} \frac{1}{i+\sum_{j=1}^{i} \alpha_{j} }, 
\end{equation}
\begin{equation}\label{zeta(2k) Closed Form}
\zeta(2k) =\frac{\pi^{2k}}{2^{2k}-1} + \frac{\pi^{2k}}{2^{2k}-1} \sum_{n=1}^{k} \ \sum_{\substack{(r_1,\dots ,r_n) \in [2k]^n :   
\\   \ |r_p-r_q| \notin \lbrace 0,1,2k-1 \rbrace \\ p,q \in [n]}} \ \prod_{i=1}^{n} \frac{1}{i+\sum_{j=1}^{i} \alpha_{j} },
\end{equation}
with $\alpha_j$ defined in \eqref{exponents}. In particular the inner sums of \eqref{Volume Closed Form}, \eqref{S(k) Closed Form}, and \eqref{zeta(2k) Closed Form} are taken over all tuples in $[k]^n$ having cyclically pairwise nonconsecutive entries. 
\end{theorem}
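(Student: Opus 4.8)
The plan is to assemble Theorem~\ref{3.3.7} directly from the case analysis already developed in Theorems~\ref{3.3.1}--\ref{3.3.6}, and then translate the volume formula into the two series formulas via the relations established earlier in the paper. First I would observe that the event $\{U_i + U_{i+1} < 1 \text{ for all } i \in [k]\}$ whose probability is $\mathrm{Vol}(\Delta^k)$ decomposes, up to a null set, according to which subset $\{r_1, \dots, r_n\} \subseteq [k]$ of indices satisfies $U_{r_j} \geq 1/2$. The $n=0$ piece (all $U_i < 1/2$) contributes exactly $(1/2)^k$ by Theorem~\ref{3.3.1}. For $n \geq 1$, Theorem~\ref{3.3.2} tells us that only cyclically pairwise nonconsecutive index sets can occur, i.e. $|r_p - r_q| \notin \{1, k-1\}$ for distinct $p,q$, and Theorem~\ref{3.3.3} caps the size at $n \leq \lfloor k/2 \rfloor$, which is exactly why the outer sum in \eqref{Volume Closed Form} runs from $n=1$ to $\lfloor k/2 \rfloor$. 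For a fixed such index set, I would further split according to the ordering of $U_{r_1}, \dots, U_{r_n}$; by symmetry each of the $n!$ orderings contributes equally, and the ordering $1/2 \leq U_{r_n} \leq \dots \leq U_{r_1} < 1$ contributes precisely $(1/2)^k \prod_{i=1}^n \frac{1}{i + \sum_{j=1}^i \alpha_j}$ by Theorem~\ref{3.3.6}, with $\alpha_j$ as in \eqref{exponents}.

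The bookkeeping step is reconciling ``sum over subsets, one ordering each, times $n!$'' with the stated ``sum over all ordered tuples $(r_1, \dots, r_n) \in [k]^n$ with pairwise distinct, cyclically nonconsecutive entries'', which is the form written in \eqref{Volume Closed Form}. The cleanest route is to note that summing $\prod_{i=1}^n \frac{1}{i + \sum_{j=1}^i \alpha_j}$ over \emph{all} $n!$ orderings of a fixed nonconsecutive subset $\{r_1,\dots,r_n\}$ yields exactly $n!$ times the single value obtained for the decreasing ordering --- because the quantity $\prod_{i}\frac{1}{i+\sum_{j\le i}\alpha_j}$ depends only on the \emph{unordered} set. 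Indeed $\alpha_j$ counts neighbors $x \in \{r_j \pm 1\}$ of $r_j$ not already ``claimed'' by an earlier $r_m$, so $\sum_{j=1}^n \alpha_j$ is just the total number of indices $x$ adjacent (cyclically) to the set $\{r_1,\dots,r_n\}$, independent of order; and the partial structure works out so that the product telescopes to the same value. Hence $\sum_{\text{orderings}} \prod_i \frac{1}{i+\sum_{j\le i}\alpha_j} = n! \cdot (\text{one ordering})$, and since the probability of the unordered event is $n!$ times the probability of the ordered-descending event (each ordering equiprobable), the factor of $n!$ is absorbed exactly --- so summing the per-ordering Theorem~\ref{3.3.6} contribution over all ordered tuples reproduces $\mathrm{Vol}(\Delta^k)$. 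I would then note the constraint $|r_p - r_q| \notin \{0\}$ encodes distinctness and $\notin \{1, k-1\}$ encodes cyclic nonconsecutiveness, matching the index set under the inner sum in \eqref{Volume Closed Form}.

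For the two corollaries inside the statement: \eqref{S(k) Closed Form} follows immediately by multiplying \eqref{Volume Closed Form} by $(\pi/2)^k$, using $S(k) = (\pi/2)^k \mathrm{Vol}(\Delta^k)$ from \eqref{S(k) Polytope Relation} and $(\pi/2)^k \cdot (1/2)^k = (\pi/4)^k$. For \eqref{zeta(2k) Closed Form}, I would apply \eqref{Zeta(2k) in terms of S(2k)}, namely $\zeta(2k) = \frac{2^{2k}}{2^{2k}-1} S(2k)$, to the $k \mapsto 2k$ instance of \eqref{S(k) Closed Form}; then $\frac{2^{2k}}{2^{2k}-1} (\pi/4)^{2k} = \frac{2^{2k}}{2^{2k}-1} \cdot \frac{\pi^{2k}}{2^{4k}} = \frac{\pi^{2k}}{2^{2k}(2^{2k}-1)}$---wait, I should double-check this arithmetic in the write-up, since the claimed coefficient is $\frac{\pi^{2k}}{2^{2k}-1}$; the resolution is that $S(2k)$ uses $(\pi/4)^{2k} = \pi^{2k}/4^{2k} = \pi^{2k}/2^{4k}$ while $\zeta(2k) = \frac{2^{2k}}{2^{2k}-1}S(2k)$ gives $\frac{\pi^{2k}}{2^{2k}(2^{2k}-1)}$, so I will need to recheck whether the intended normalization in \eqref{zeta(2k) Closed Form} carries the $(1/2)^{2k}$ inside the product terms differently --- this reconciliation of constants between \eqref{S(k) Closed Form} at argument $2k$ and \eqref{zeta(2k) Closed Form} is the one spot I expect to need genuine care, and the outer sum bound becomes $n = 1$ to $\lfloor 2k/2 \rfloor = k$ with ambient set $[2k]$ and forbidden differences $\{0, 1, 2k-1\}$ as written. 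The main obstacle, then, is not any single hard lemma --- all the analytic content is in Theorems~\ref{3.3.1}--\ref{3.3.6} --- but rather the careful combinatorial accounting that the sum over ordered nonconsecutive tuples, with the order-dependent-looking $\alpha_j$, collapses correctly onto the sum over subsets with the $n!$ factor exactly cancelling, together with tracking the powers of $2$ and $\pi$ through the three formulas.
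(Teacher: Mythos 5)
Your overall plan — decompose $\mathrm{Vol}(\Delta^k)$ by which indices exceed $1/2$ and by the ordering of the large coordinates, then assemble Theorems \ref{3.3.1}--\ref{3.3.6} — is exactly the paper's proof. But two of your intermediate claims are wrong, even though both times the formula you land on is still correct.

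First, the claim that ``by symmetry each of the $n!$ orderings contributes equally'' and, equivalently, that $\prod_{i=1}^{n}\frac{1}{i+\sum_{j\le i}\alpha_j}$ ``depends only on the unordered set'' is false. The total $\sum_j \alpha_j$ is indeed order-independent (it counts all cyclic neighbors of the set), but the \emph{partial} sums $\sum_{j\le i}\alpha_j$ are not, and the product does not telescope to an order-free value. Concretely, take $k=7$ and the set $\{1,3,5\}$. The ordering $(r_1,r_2,r_3)=(1,3,5)$ gives $(\alpha_1,\alpha_2,\alpha_3)=(2,1,1)$, hence $\frac{1}{3}\cdot\frac{1}{5}\cdot\frac{1}{7}=\frac{1}{105}$, while the ordering $(1,5,3)$ gives $(\alpha_1,\alpha_2,\alpha_3)=(2,2,0)$, hence $\frac{1}{3}\cdot\frac{1}{6}\cdot\frac{1}{7}=\frac{1}{126}$. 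Different orderings genuinely yield different probabilities, because the $\alpha_j$ record which of the two ``overshoot'' constraints wins when a small-index variable is sandwiched between two large ones, and that depends on the relative sizes. The good news is that you do not need the symmetry step at all: Theorem \ref{3.3.6} assigns a probability to each \emph{ordered} tuple (the event where $U_{r_1}\ge\cdots\ge U_{r_n}$ in that specific order), and these events partition the $n\ge 1$ piece of the sample space into disjoint sub-events. Summing over all ordered tuples with distinct, cyclically nonconsecutive entries therefore reproduces the probability directly, with no factor of $n!$ to reconcile. That is the paper's one-line argument, and you should drop the symmetry claim in favor of it.

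Second, your unease about the constant in \eqref{zeta(2k) Closed Form} is well founded: specializing \eqref{S(k) Closed Form} to $2k$ and multiplying by $\frac{2^{2k}}{2^{2k}-1}$ gives a prefactor $\frac{2^{2k}}{2^{2k}-1}\cdot\frac{\pi^{2k}}{4^{2k}}=\frac{\pi^{2k}}{2^{2k}(2^{2k}-1)}$, not $\frac{\pi^{2k}}{2^{2k}-1}$. A quick sanity check at $k=1$ confirms this: the inner sum evaluates to $1$, so the stated formula would give $\zeta(2)=\frac{2\pi^2}{3}$ rather than $\frac{\pi^2}{6}$. This appears to be a typographical slip in the statement; the correct coefficient is $\frac{\pi^{2k}}{2^{2k}(2^{2k}-1)}$, and that is what your derivation should (and does, after you resolve your own ``wait'' moment) produce. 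So: fix the order-dependence reasoning, keep your skepticism about the $\zeta(2k)$ constant, and the rest of your proposal tracks the paper's proof.
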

\begin{proof}
The first term in \eqref{Volume Closed Form} follows from Theorem \ref{3.3.1}. Next, we consider all possible tuples of length $n$ with entries in $[k]$ satisfying the hypotheses for Theorem \ref{3.3.2}. We use Theorem \ref{3.3.6} to compute corresponding probability \eqref{Polytope Probability} for each of those tuples. We sum of all these probabilities together to get the second term. We see \eqref{S(k) Closed Form} immediately follows from \eqref{S(k) Polytope Relation}, and \eqref{zeta(2k) Closed Form} follows directly from \eqref{Zeta(2k) in terms of S(2k)} and the fact $\left \lfloor 2k/2 \right \rfloor=k.$
\end{proof}

\begin{remark*}
When $k=1,$ Theorem \ref{3.3.3} implies there are no random variables that can be at least $1/2,$ which then results in the second terms of \eqref{Volume Closed Form} and \eqref{S(k) Closed Form} vanishing. However, the first terms stay put due to Theorem \ref{3.3.1}.
\end{remark*}

\section{Cauchy Random Variables Revisited}
We recall the Basel Problem solution by Zagier and Kontsevich \cite[p. 9]{ZK} and then show how its generalization leads us a probability involving the  cyclically pairwise consecutive products $k$ independent, nonnegative Cauchy random variables. 
\subsection{Zagier's and Kontsevich's Basel Problem Solution}
We start with the double integral $D_3,$ which we modify as
\begin{equation} \label{Double Integral 3 Modified}
\frac{4}{\pi^2} \int_{0}^{1} \int_{0}^{1} \frac{1}{4\sqrt{x_1x_2}(1-x_1x_2)} \ dx_1 \ dx_2.
\end{equation} 
From a substitution $x=\sqrt{u},y=\sqrt{v}$ on $D_2,$ we see that \eqref{Double Integral 3 Modified} is $4S(2)/{\pi^2}.$ Using the change of variables from \eqref{Zagier COV},
\[x_1=  \frac{\xi^2_1(\xi^2_2+1)}{\xi^2_1+1},  \quad x_2=  \frac{\xi^2_2(\xi^2_1+1)}{\xi^2_2+1},\]
with Jacobian Determinant 
\[\left | \frac{\partial(x_1,x_2)}{\partial(\xi_1,\xi_2)} \right | = \frac{4\sqrt{x_1x_2}(1-x_1x_2)}{(\xi^2_1+1)(\xi^2_2+1)},\]
\eqref{Double Integral 3 Modified} becomes
\begin{align}
\frac{4}{\pi^2}\int_{0}^{\infty} \int_{0}^{\frac{1}{\xi_1}} \frac{1}{(\xi^2_1+1)(\xi^2_2+1)} \ d\xi_2 \ d\xi_1 & = \frac{2}{\pi^2} \int_{0}^{\infty} \int_{0}^{\infty} \frac{1}{(\xi^2_1+1)(\xi^2_2+1)} \ d\xi_2 \ d\xi_1  \label{Involute} \\
& =  \frac{1}{2} \nonumber ,
\end{align}
in which the equality \eqref{Involute} follows from a symmetry argument. Thus, we see
\[\frac{4}{\pi^2}S(2)=\frac{1}{2},\] which implies $S(2)=\pi^2/8.$

\subsection{From Integral to Hyperbolic Polytope}
The analogue of \eqref{Double Integral 3 Modified} is
\begin{equation} \label{Double Integral 3 General Modified}
\frac{2^k}{\pi^k} \int_{0}^{1} \int_{0}^{1} \frac{1}{2^k \sqrt{x_1 \dots x_k}(1-(-1)^{k}x_1 \dots x_k)} \ dx_1 \dots \ dx_k,
\end{equation} 
which is $(2/\pi)^kS(k)$ using a change of variables $x_i=\sqrt{u_i}, 1 \leq i \leq k$ on \eqref{Multi Integral 2}. The change of variables generalizing \eqref{Zagier COV} is
\begin{equation} \label{Generalized Zagier COV}
x_i= \frac{\xi^2_i(\xi^2_{i+1}+1)}{\xi^2_i+1}, \quad 1 \leq i \leq k,
\end{equation}
with cyclical indexing mod $k:$ $\xi_{k+1}:=\xi_{1}.$ The change of variables in \eqref{Generalized Zagier COV} has Jacobian Determinant
\begin{align} \label{Generalized Zagier Determinant}
\left |\frac{\partial(x_1, \dots, x_k)}{\partial(\xi_1, \dots, \xi_k)} \right |= \frac{2^k \sqrt{x_1 \dots x_k}(1-(-1)^{k}x_1 \dots x_k)}{(\xi_{1}^2+1) \dots (\xi_{k}^2+1)},
\end{align}
and diffeomorphically maps $(0,1)^k$ to the hyperbolic polytope defined in \eqref{Hypertope}, namely
\begin{align*}
\mathbb{H}^k= \lbrace (\xi_1, \dots, \xi_k) \in \mathbb{R}^k : \xi_i\xi_{i+1}<1, \ \xi_i>0, 1 \leq i \leq k \rbrace.
\end{align*}
The proof of the Jacobian Determinant is remarkably similar in structure to the proof of \eqref{Jacobian}.

If $\Xi_1, \dots, \Xi_k$ are independent, nonnegative Cauchy random variables, then 
\begin{align} \label{S(k) Hypertope Relation}
\frac{2^k}{\pi^k}S(k)=\text{Pr}(\Xi_1\Xi_2<1, \dots, \Xi_{k-1}\Xi_{k}<1, \Xi_{k}\Xi_{1}<1).
\end{align} 
We already know  the left hand side of \eqref{S(k) Hypertope Relation} is $\text{Vol}(\Delta^k),$ which we found by combinatorially analyzing the probability from \eqref{Polytope Probability}. We show the connections between the analyses of \eqref{Polytope Probability} and 
\begin{align}\label{Hypertope Probability}
\text{Pr}(\Xi_1\Xi_2<1, \dots, \Xi_{k-1}\Xi_{k}<1, \Xi_{k}\Xi_{1}<1),
\end{align}
which both lead to the formulas in \eqref{Volume Closed Form}, \eqref{S(k) Closed Form}, and \eqref{zeta(2k) Closed Form}.

\subsection{Hyperbolic Polytope and Combinatorics}
We state the analogues of Theorems \ref{3.3.1}-\ref{3.3.6}, and prove only those directly dealing with \eqref{Hypertope Probability}. The omitted proofs are similar to those in the previous section. 

\begin{theorem} \label{4.3.1}
If $\Xi_i<1$ for all $i \in [k],$ then \eqref{Hypertope Probability} is equal to $\left(1/2\right)^k.$
\end{theorem}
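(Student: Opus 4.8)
The plan is to mimic the proof of Theorem \ref{3.3.1} exactly, translating the Cauchy-variable statement into an integral of the joint density over a sub-box and recognizing that the hypothesis $\Xi_i<1$ already forces all the defining inequalities $\Xi_i\Xi_{i+1}<1$ to hold. First I would note that since $\Xi_1,\dots,\Xi_k$ are independent, nonnegative Cauchy random variables, their joint density is $\prod_{i=1}^k \frac{2}{\pi}\frac{1}{\xi_i^2+1}$ by \eqref{Joint PDF Independence} and \eqref{Cauchy RV}. Next, under the assumption $\xi_i<1$ for every $i\in[k]$, each cyclic product satisfies $\xi_i\xi_{i+1}<1\cdot 1 = 1$, so every constraint defining $\mathbb{H}^k$ is automatically satisfied on the box $(0,1)^k$; hence the event in \eqref{Hypertope Probability}, intersected with $\{\Xi_i<1\ \forall i\}$, is just the box.

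Therefore the probability in question equals
\[
\int_0^1 \dots \int_0^1 \prod_{i=1}^{k} \frac{2}{\pi}\,\frac{1}{\xi_i^2+1} \ d\xi_1 \dots d\xi_k
= \prod_{i=1}^{k} \left( \frac{2}{\pi} \int_0^1 \frac{d\xi_i}{\xi_i^2+1} \right).
\]
Each one-dimensional integral is $\int_0^1 \frac{d\xi}{\xi^2+1} = \arctan(1) = \pi/4$, so the product collapses to $\left(\frac{2}{\pi}\cdot\frac{\pi}{4}\right)^k = (1/2)^k$, which is the claimed value. The whole argument is a few lines: set up the density product, observe the constraints are vacuous on the unit box, factor the integral, evaluate the arctangent.

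The only point requiring a moment's care — and the closest thing to an obstacle — is the logical step that on the region $\{\Xi_i < 1 \text{ for all } i\}$ the full list of cyclic inequalities $\Xi_i\Xi_{i+1}<1$ holds automatically, so that the probability is computed over the plain box with no further restriction; this is the exact analogue of the observation $U_i+U_{i+1}<\tfrac12+\tfrac12=1$ used in Theorem \ref{3.3.1}, and it is what makes the factorization legitimate. Everything else is the routine evaluation $\frac{2}{\pi}\arctan(1)=\frac12$ raised to the $k$-th power.
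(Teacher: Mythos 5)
Your proof is correct and takes essentially the same approach as the paper: both restrict to the box $(0,1)^k$, where the cyclic constraints $\xi_i\xi_{i+1}<1$ are automatic, factor the resulting integral of the joint Cauchy density into $k$ identical one-dimensional pieces, and evaluate each as $\frac{2}{\pi}\arctan(1)=\tfrac12$. If anything your write-up is the cleaner of the two, since the paper's displayed computation carries a spurious prefactor of $2^k/\pi^k$ in front of an integral whose integrand $f_{\Xi_1}\cdots f_{\Xi_k}$ already supplies that factor, and you also spell out explicitly why the hypothesis $\Xi_i<1$ makes every constraint defining $\mathbb{H}^k$ vacuous.
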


\begin{proof}
The integral corresponding to \eqref{Hypertope Probability} is
\begin{align*}
\frac{2^k}{\pi^k} \int_{0}^{1} \dots \int_{0}^{1} f_{\Xi_1}(\xi_1) \dots f_{\Xi_k}(\xi_k) \ d\xi_k \dots d\xi_1 & = \frac{2^k}{\pi^k} \left(\int_{0}^{1} \frac{1}{\xi_1^2+1} d\xi_1 \right)^k  \\
& = \frac{2^k}{\pi^k} \frac{\pi^k}{4^k} = \left(\frac{1}{2}\right)^k.
\end{align*}
\end{proof}

\begin{theorem}\label{4.3.2}
Suppose $\Xi_{i}\Xi_{i+1}<1$ for all $i \in [k].$ Let there be distinct indices $r_1, \dots ,r_n \in [k]$ such that $\Xi_{r_1}, \dots \Xi_{r_n}, \geq 1$ Then for all distinct $p,q \in [n],$ we have 
$|r_p-r_q| \notin \lbrace 1,k-1 \rbrace.$ 
\end{theorem}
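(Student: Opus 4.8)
\textbf{Proof proposal for Theorem \ref{4.3.2}.}
The plan is to mimic exactly the argument used for Theorem \ref{3.3.2}, translating the additive obstruction $U_{r_p}+U_{r_q}<1$ into the multiplicative obstruction $\Xi_{r_p}\Xi_{r_q}<1$. I would argue by contradiction: suppose there exist distinct $p,q\in[n]$ with $|r_p-r_q|\in\{1,k-1\}$ and $\Xi_{r_p},\Xi_{r_q}\geq 1$. The point is that $|r_p-r_q|\in\{1,k-1\}$ is precisely the condition that the indices $r_p,r_q$ are cyclically consecutive, so one of the defining inequalities $\Xi_{i}\Xi_{i+1}<1$ of $\mathbb{H}^k$ has $\{i,i+1\}=\{r_p,r_q\}$.

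First I would dispose of the case $|r_p-r_q|=1$: here $r_p$ and $r_q$ differ by one, so they are consecutive indices and one of the hypotheses reads $\Xi_{r_p}\Xi_{r_q}<1$. But $\Xi_{r_p},\Xi_{r_q}\geq 1$ forces $\Xi_{r_p}\Xi_{r_q}\geq 1$, a contradiction. Next I would treat $|r_p-r_q|=k-1$: since $r_p,r_q\in[k]$ are distinct, the only way their difference can be $k-1$ is $\{r_p,r_q\}=\{1,k\}$, and then the wrap-around hypothesis $\Xi_k\Xi_1<1$ applies; again $\Xi_1,\Xi_k\geq 1$ gives $\Xi_k\Xi_1\geq 1$, a contradiction. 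Combining the two cases proves that no such pair $p,q$ exists, i.e.\ $|r_p-r_q|\notin\{1,k-1\}$ for all distinct $p,q\in[n]$.

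Honestly, there is no real obstacle here — the proof is a verbatim analogue of Theorem \ref{3.3.2} with the inequality $a+b<1$ replaced by $ab<1$, exploiting that both $a+b$ and $ab$ exceed $1$ whenever $a,b\geq 1$ (for the additive case one needs $a,b\geq 1/2$, but for the multiplicative case the threshold is $1$, consistent with the hypothesis $\Xi_{r_i}\geq 1$). The only thing worth a moment's care is making sure the threshold value matches: in the uniform setting the critical value was $1/2$ because $U_i+U_{i+1}<1$, whereas in the Cauchy setting the critical value is $1$ because $\Xi_i\Xi_{i+1}<1$; this is why the analogue of Theorem \ref{3.3.1} uses the event $\Xi_i<1$ rather than $\Xi_i<1/2$. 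With that bookkeeping in place the argument is immediate, so I would present it tersely, as the excerpt already signals it intends to do.
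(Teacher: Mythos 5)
Your proof is correct and is exactly the argument the paper intends: it says only ``Similar to the proof of Theorem~\ref{3.3.2},'' and you carry out the translation verbatim, replacing the additive contradiction $U_{r_p}+U_{r_q}\geq 1$ (from $U_{r_p},U_{r_q}\geq 1/2$) with the multiplicative contradiction $\Xi_{r_p}\Xi_{r_q}\geq 1$ (from $\Xi_{r_p},\Xi_{r_q}\geq 1$) in both the $|r_p-r_q|=1$ and $|r_p-r_q|=k-1$ cases. Nothing is missing and the approach matches the paper's.
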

\begin{proof}
Similar to the proof of Theorem \ref{3.3.2}.
\end{proof}

\begin{theorem} \label{4.3.3}
Reconsider the hypotheses from Theorem \ref{4.3.2}. Then we have $n \leq \lfloor k/2 \rfloor.$
\end{theorem}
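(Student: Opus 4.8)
\textbf{Proof proposal for Theorem \ref{4.3.3}.} The plan is to mirror the argument used for Theorem \ref{3.3.3}, since the combinatorial content is identical once the hypothesis of Theorem \ref{4.3.2} is in hand. The key observation is that Theorem \ref{4.3.2} gives us distinct indices $r_1, \dots, r_n \in [k]$ that are cyclically pairwise nonconsecutive, i.e.\ $|r_p - r_q| \notin \{1, k-1\}$ for all distinct $p, q \in [n]$, and the bound $n \le \lfloor k/2 \rfloor$ is a purely set-theoretic consequence of this nonconsecutiveness --- it makes no further reference to the random variables $\Xi_i$ or to the inequalities $\Xi_i \Xi_{i+1} < 1$. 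So the entire proof can be carried out on the index set $[k]$ arranged cyclically.

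First I would, as in Theorem \ref{3.3.3}, apply an increasing permutation $\sigma$ so that $\sigma(r_1) < \dots < \sigma(r_n)$, noting that the cyclic nonconsecutiveness is preserved (it is a property of the underlying set of residues mod $k$, and relabeling does not change gaps). Then for each consecutive pair $\sigma(r_j), \sigma(r_{j+1})$ with $j \in [n-1]$, nonconsecutiveness forces at least one integer $x_j$ strictly between them, and a further integer $x_n$ either above $\sigma(r_n)$ or below $\sigma(r_1)$ (wrapping around cyclically). This produces $2n$ distinct elements of $[k]$, forcing $2n \le k$, hence $n \le \lfloor k/2 \rfloor$ when $k$ is even. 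When $k$ is odd, the same count gives $2n \le k$, and since $2n$ is even while $k$ is odd we cannot have equality, so $2n \le k - 1$, i.e.\ $n \le (k-1)/2 = \lfloor k/2 \rfloor$.

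Since this is word-for-word the argument of Theorem \ref{3.3.3} with $U_i$ replaced by $\Xi_i$ and the threshold $1/2$ replaced by $1$ (neither of which enters the counting), the honest thing to write is simply:

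\begin{proof}
Similar to the proof of Theorem \ref{3.3.3}, replacing the inequalities $U_{i} + U_{i+1} < 1$ by $\Xi_i \Xi_{i+1} < 1$; the bound on $n$ depends only on the cyclic nonconsecutiveness of $r_1, \dots, r_n$ established in Theorem \ref{4.3.2}, not on the random variables themselves.
\end{proof}

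There is essentially no obstacle here: the only thing to double-check is that the cyclic wrap-around case $|r_p - r_q| = k - 1$ is correctly excluded so that the "extra" integer $x_n$ genuinely lies outside the block $\sigma(r_1), \dots, \sigma(r_n)$ without colliding with one of the $x_j$ --- but this is exactly what was verified in Theorem \ref{3.3.3}, so the argument transfers verbatim.
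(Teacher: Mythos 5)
Your proposal matches the paper exactly: the paper's proof of Theorem \ref{4.3.3} is the one-line statement ``Similar to the proof of Theorem \ref{3.3.3},'' and your observation that the bound depends only on the cyclic nonconsecutiveness of the indices (established in Theorem \ref{4.3.2}) and not on the random variables themselves is precisely why that transfer is valid. Your spelled-out version of the counting argument is a faithful reproduction of the proof of Theorem \ref{3.3.3}, so there is nothing to correct.
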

\begin{proof}
Similar to the proof of Theorem \ref{3.3.3}.
\end{proof}

\begin{theorem}\label{4.3.4}
Given $y_1, \dots ,y_n \in \mathbb{R}$ with $1 \leq y_n \leq \dots \leq y_1,$ and nonnegative integers $\beta_1, \dots, \beta_n,$ define 
\begin{align} \label{hyperpolynomial}
\phi(y_1, \dots, y_n)=\prod_{i=1}^{n}  \frac{\left(\cot^{-1}(y_i) \right)^{\beta_i}}{y^2_i+1}.
\end{align}
Then the following integral formula holds
\begin{equation} \label{hypertope integral formula}
\int_{1}^{\infty} \int_{1}^{y_1} \dots \int_{1}^{y_{n-1}} \phi(y_1, \dots, y_n) \ dy_1 \dots \ dy_n  = \left( \frac{\pi}{4}\right) ^{n+\sum_{j=1}^{n} \beta_j} \prod_{i=1}^{n} \frac{1}{i+\sum_{j=1}^{i} \beta_{j}}.
\end{equation}
\end{theorem}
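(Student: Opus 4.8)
The plan is to reduce \eqref{hypertope integral formula} to the integral formula already proved in Theorem \ref{3.3.4} by an arccotangent substitution. First I would substitute $u_i=\cot^{-1}(y_i)$ in each of the $n$ variables, using $du_i=-dy_i/(1+y_i^2)$, so that each factor $\frac{(\cot^{-1}(y_i))^{\beta_i}}{y_i^2+1}\,dy_i$ becomes $-u_i^{\beta_i}\,du_i$. Since $\cot^{-1}$ is strictly decreasing and maps $[1,\infty)$ onto $(0,\pi/4]$, the constraint $1\le y_n\le\dots\le y_1$ becomes $0<u_1\le u_2\le\dots\le u_n\le\pi/4$, and an inner limit $y_j\in[1,y_{j-1}]$ becomes $u_j\in[u_{j-1},\pi/4]$. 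Keeping track of the sign change produced by reversing each pair of limits of integration, the left-hand side of \eqref{hypertope integral formula} turns into
\[
\int_{0}^{\pi/4} u_1^{\beta_1}\int_{u_1}^{\pi/4} u_2^{\beta_2}\dots\int_{u_{n-1}}^{\pi/4} u_n^{\beta_n}\,du_n\dots du_1 .
\]

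This is exactly the integral on the first line of \eqref{Reverse Order} in the proof of Theorem \ref{3.3.4}, with the constant $1/2$ replaced throughout by $\pi/4$. I would then proceed as there: reverse the order of integration to rewrite it as $\int_{0}^{\pi/4}\int_{0}^{u_n}\dots\int_{0}^{u_2}\prod_{i=1}^n u_i^{\beta_i}\,du_1\dots du_n$ and induct on $n$, integrating $u_1,u_2,\dots$ successively (each step contributes a factor $\frac{1}{i+\sum_{j=1}^i\beta_j}$ and raises the exponent of the next variable appropriately). Equivalently, one may simply invoke the conclusion of that computation verbatim with $1/2\mapsto\pi/4$. In either case the value equals $\left(\frac{\pi}{4}\right)^{n+\sum_{j=1}^n\beta_j}\prod_{i=1}^n\frac{1}{i+\sum_{j=1}^i\beta_j}$, which is the claimed right-hand side.

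The argument is essentially routine; the only delicate points are (i) that the substitution matches the endpoint $y=1$ with $u=\pi/4$ and $y\to\infty$ with $u\to 0$, which is precisely why the constant $\pi/4$ appears in place of the $1/2$ of Theorem \ref{3.3.4}, and (ii) the orientation bookkeeping for all $n$ one-dimensional integrals, so that no stray sign remains and the nested limits come out in the right order. I would also remark that the improper integral in the outermost variable converges, since $\cot^{-1}(y_1)$ is bounded and $1/(y_1^2+1)$ is integrable on $[1,\infty)$, which legitimizes the iterated substitution and the reversal of the order of integration on the nested region.
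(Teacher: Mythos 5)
Your proposal is correct, and it differs from the paper's proof only in the choice of substitution. The paper sets $y_i = 1/z_i$, which uses $\cot^{-1}(1/z_i)=\tan^{-1}(z_i)$ to turn the left side of \eqref{hypertope integral formula} into
\[
\int_{0}^{1}\int_{z_1}^{1}\dots\int_{z_{n-1}}^{1}\prod_{i=1}^{n}\frac{\left(\tan^{-1}(z_i)\right)^{\beta_i}}{z_i^2+1}\,dz_n\dots dz_1,
\]
then reverses the order of integration and inducts, integrating powers of $\tan^{-1}$ at each step. You instead push the substitution one step further, taking $u_i=\cot^{-1}(y_i)$ outright, which collapses the integrand to the pure monomial form $\prod u_i^{\beta_i}$ over the simplex-like region with outer bound $\pi/4$. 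That is precisely the shape appearing in \eqref{Reverse Order} of Theorem \ref{3.3.4} with $1/2$ replaced by $\pi/4$, so you can invoke that computation verbatim instead of redoing the induction with an arctangent kernel. Both routes are short and essentially equivalent; yours has the mild advantage of making the structural parallel between Theorems \ref{3.3.4} and \ref{4.3.4} completely explicit, while the paper's $1/z$ substitution more literally mirrors the $1-t$ substitution used in the earlier theorem. Your bookkeeping of the endpoint orientation ($y=1\leftrightarrow u=\pi/4$, $y\to\infty\leftrightarrow u\to 0$) and of the $n$ sign flips is correct, as is your convergence remark for the outer improper integral.
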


\begin{proof}
We make the change of variables $y_i=1/z_i, i \in [n],$ to see the left hand side of \eqref{hypertope integral formula} becomes 
\begin{align*}
\int_{0}^{1} \int_{z_1}^{1} \dots \int_{z_{n-1}}^{1}\prod_{i=1}^{n}  \frac{\left(\tan^{-1}(z_i) \right)^{\beta_i}}{z^2_i+1} \ dz_n \dots \ dz_1 & = \int_{0}^{1} \int_{0}^{z_n} \dots \int_{0}^{z_2}\prod_{i=1}^{n} \frac{\left(\tan^{-1}(z_i) \right)^{\beta_i}}{z^2_i+1} \ dz_1 \dots \ dz_n \\
& = \left( \frac{\pi}{4}\right) ^{n+\sum_{j=1}^{n} \beta_j} \prod_{i=1}^{n} \frac{1}{i+\sum_{j=1}^{i} \beta_{j}},
\end{align*}
which is seen upon induction on $n.$ 
\end{proof}
\begin{theorem}\label{4.3.5}
Reconsider the hypotheses from Theorem \ref{4.3.2}, but this time, let us have $1 \leq \Xi_{r_n} \leq \dots \leq \Xi_{r_1}.$ For each $j \in [n],$ let $\alpha_j$ be the number of $x \in \lbrace r_j \pm 1 \rbrace$ such that $\Xi_x<1/{\Xi_{r_j}} \leq 1/{\Xi_{r_l}}$ holds for all $l \in [n] \setminus \lbrace j \rbrace.$ Then we see $\alpha_j$ is the same as it is in \eqref{exponents}, namely
\[\alpha_j= 2-\delta(k,2)-\sum_{m=1}^{j-1} \delta(|r_m-r_j|, 2) + \delta(|r_m-r_j|, k-2).  \]
\end{theorem}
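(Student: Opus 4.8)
The plan is to mirror the proof of Theorem \ref{3.3.5} essentially line for line, translating every additive statement into its multiplicative counterpart through the order‑reversing bijection $t\mapsto 1/t$ of $[1,\infty)$ onto $(0,1]$, which here plays exactly the role that $t\mapsto 1-t$ of $[1/2,1)$ onto $(0,1/2]$ played there. Concretely, the hypothesis $\Xi_i\Xi_{i+1}<1$ rewrites as $\Xi_{r_j\pm 1}<1/\Xi_{r_j}$; the ordering $1\le\Xi_{r_n}\le\dots\le\Xi_{r_1}$ gives $1/\Xi_{r_1}\le 1/\Xi_{r_2}\le\dots\le 1/\Xi_{r_n}$; and $m<j$ forces $\Xi_{r_m}\ge\Xi_{r_j}$, hence $1/\Xi_{r_m}\le 1/\Xi_{r_j}$. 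These are the precise analogues of the three facts the earlier proof relied on, so that ``earlier index $\Rightarrow$ tighter upper bound on the corresponding neighbour variable'' still holds.

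First I would dispose of the small cases. When $k=2$, Theorem \ref{4.3.3} gives $n\le 1$, and indeed $n=1$, since $\Xi_1\Xi_2<1$ forbids $\Xi_1,\Xi_2\ge 1$ simultaneously; then $\{r_1\pm 1\}$ is the single index $3-r_1$, which satisfies $\Xi_{3-r_1}<1/\Xi_{r_1}$, so $\alpha_1=1$, matching $2-\delta(2,2)=1$ in \eqref{exponents}. For $k>2$ and $j=1$ the index $\Xi_{r_1}$ is maximal, so $1/\Xi_{r_1}\le 1/\Xi_{r_l}$ for every $l$; the two neighbours $r_1-1,\,r_1+1$ are distinct elements of $[k]$ and each satisfies $\Xi_{r_1\pm1}<1/\Xi_{r_1}$ by the hypothesis, so both are counted and $\alpha_1=2$, matching $2-\delta(k,2)=2$.

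For $k>2$ and $j>1$ I would argue exactly as in Theorem \ref{3.3.5}: an index $x\in\{r_j\pm1\}$ fails to contribute to $\alpha_j$ precisely when $x$ is also a neighbour of some $r_m$ with $m<j$, because then $\Xi_x<1/\Xi_{r_m}\le 1/\Xi_{r_j}$, so the binding upper bound on $\Xi_x$ is inherited from $m$, not from $j$. Two distinct cyclic indices $r_m,r_j$ have a common neighbour iff $|r_m-r_j|\in\{2,k-2\}$, and the number of their common neighbours is exactly $\delta(|r_m-r_j|,2)+\delta(|r_m-r_j|,k-2)$. Since $x$ has at most two neighbours, one of which is $r_j$, it can be a common neighbour of $r_j$ with at most one earlier index $m$, so summing over $m\in[j-1]$ produces no over‑counting and the number of excluded $x$ equals $\sum_{m=1}^{j-1}\bigl(\delta(|r_m-r_j|,2)+\delta(|r_m-r_j|,k-2)\bigr)$; hence $\alpha_j=2-\sum_{m=1}^{j-1}\bigl(\delta(|r_m-r_j|,2)+\delta(|r_m-r_j|,k-2)\bigr)$, which is \eqref{exponents} because $\delta(k,2)=0$ for $k>2$.

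The main obstacle is bookkeeping rather than any new idea: one must check that the neighbour count on the cyclic index set $[k]$ is handled consistently, in particular the degenerate coincidence $2=k-2$ when $k=4$ (where two antipodal indices legitimately share both of their neighbours and the formula correctly returns $2$, forcing $\alpha_j=0$ there), together with the small cases $k=2,3$ in which Theorem \ref{4.3.3} already collapses $n$ to $\le1$. Once the correspondence $t\mapsto 1/t$ is installed faithfully, the remainder of the argument is identical in structure to the proof of Theorem \ref{3.3.5}.
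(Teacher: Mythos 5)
Your proposal is correct and follows the same route the paper intends: the paper's own proof of Theorem \ref{4.3.5} is just the single line ``Similar to the proof of Theorem \ref{3.3.5},'' and what you have written is precisely the faithful translation of that proof under the order-reversing bijection $t\mapsto 1/t$, with the three facts you single out playing the roles of $U_{i}+U_{i+1}<1$, the monotonicity of $1-U_{r_j}$, and the inheritance of the binding bound from an earlier index. Your additional check of the degenerate coincidence $2=k-2$ when $k=4$, and the observation that each neighbour $x$ of $r_j$ can coincide with a neighbour of at most one earlier $r_m$ so that no over-counting occurs in the $\delta$-sum, are correct and in fact make the bookkeeping more explicit than either of the paper's two proofs.
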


\begin{proof}
Similar to the proof of Theorem \ref{3.3.5}.
\end{proof}

\begin{theorem}\label{4.3.6}
Reconsider the hypotheses from Theorem \ref{4.3.5}. Then we have that \eqref{Hypertope Probability} is equal to the right hand side of \eqref{big probability}, namely,
\[\left( \frac{1}{2}\right) ^{k} \prod_{i=1}^{n} \frac{1}{i+\sum_{j=1}^{i}\alpha_{j}},\]
with $\alpha_j$ defined as in \eqref{exponents}.
\end{theorem}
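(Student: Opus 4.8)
The plan is to follow the proof of Theorem \ref{3.3.6} verbatim in structure, replacing the uniform densities by the Cauchy densities $f_{\Xi_i}(\xi)=\tfrac{2}{\pi}\tfrac{1}{\xi^2+1}$ and the affine bound $1-U_{r_j}$ by the reciprocal bound $1/\Xi_{r_j}$. First I would sort the constraints $\Xi_i\Xi_{i+1}<1$, $i\in[k]$, according to whether the pair $\{i,i+1\}$ contains one of the indices $r_1,\dots,r_n$. If neither $i$ nor $i+1$ lies in $\{r_1,\dots,r_n\}$, then $\Xi_i<1$ and $\Xi_{i+1}<1$ force $\Xi_i\Xi_{i+1}<1$ automatically, so such constraints are vacuous; the only active constraints are of the form $\Xi_x<1/\Xi_{r_j}$ with $x\in\{r_j\pm1\}$. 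When a non-$r$ index $x$ is adjacent to two of the $r_j$'s the binding inequality is the one coming from the largest such $\Xi_{r_j}$, i.e.\ from the one with the smallest index, and this is exactly the assignment counted by $\alpha_j$ in Theorem \ref{4.3.5}. Consequently, among the $k-n$ indices outside $\{r_1,\dots,r_n\}$, exactly $\sum_{j=1}^{n}\alpha_j$ carry a bound $0<\Xi_x<1/\Xi_{r_j}$ for a unique $j$, and the remaining $k-n-\sum_{j=1}^{n}\alpha_j$ carry only the bound $0<\Xi_x<1$; here the inequalities $n\le\lfloor k/2\rfloor$ and cyclic nonconsecutivity of the $r_j$ (Theorems \ref{4.3.2} and \ref{4.3.3}) guarantee this count is nonnegative and the description is consistent.

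Next I would write \eqref{Hypertope Probability}, restricted to this configuration, as the integral of the joint density $\tfrac{2^k}{\pi^k}\prod_{i=1}^{k}\tfrac{1}{\xi_i^2+1}$ over the region just described, and integrate out the non-$r$ variables one at a time, innermost first. Each of the $k-n-\sum_{j}\alpha_j$ free variables contributes $\int_0^1\frac{d\xi}{\xi^2+1}=\frac{\pi}{4}$, while each variable owned by $r_j$ contributes $\int_0^{1/\xi_{r_j}}\frac{d\xi}{\xi^2+1}=\cot^{-1}(\xi_{r_j})$, so that $r_j$ accumulates the factor $(\cot^{-1}(\xi_{r_j}))^{\alpha_j}$. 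After this, using the hypothesis $1\le\Xi_{r_n}\le\dots\le\Xi_{r_1}$ to fix the order of the remaining integrations, what is left is
\[\frac{2^k}{\pi^k}\left(\frac{\pi}{4}\right)^{k-n-\sum_{j=1}^{n}\alpha_j}\int_{1}^{\infty}\int_{1}^{\xi_{r_1}}\dots\int_{1}^{\xi_{r_{n-1}}}\prod_{j=1}^{n}\frac{\bigl(\cot^{-1}(\xi_{r_j})\bigr)^{\alpha_j}}{\xi_{r_j}^2+1}\ d\xi_{r_n}\dots d\xi_{r_1}.\]

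The inner multiple integral is precisely of the shape \eqref{hypertope integral formula} with $y_j=\xi_{r_j}$, $\beta_j=\alpha_j$, and $\phi$ as in \eqref{hyperpolynomial}, so Theorem \ref{4.3.4} evaluates it to $\left(\frac{\pi}{4}\right)^{n+\sum_{j=1}^{n}\alpha_j}\prod_{i=1}^{n}\frac{1}{i+\sum_{j=1}^{i}\alpha_j}$. Substituting this back and collecting the powers of $\pi/4$ gives $\frac{2^k}{\pi^k}\left(\frac{\pi}{4}\right)^{k}\prod_{i=1}^{n}\frac{1}{i+\sum_{j=1}^{i}\alpha_j}=\left(\frac12\right)^{k}\prod_{i=1}^{n}\frac{1}{i+\sum_{j=1}^{i}\alpha_j}$, which is exactly \eqref{big probability}.

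I expect the main obstacle to be the bookkeeping step: confirming that the active constraints are exactly those listed, that $k-n-\sum_{j=1}^{n}\alpha_j\ge 0$, and that the variables may legitimately be integrated in the stated order (free variables first, then those owned by the various $r_j$, then the $\Xi_{r_j}$ themselves in decreasing order, which is what makes the reduction to Theorem \ref{4.3.4} possible). This is handled by combining Theorems \ref{4.3.2}, \ref{4.3.3} and \ref{4.3.5} in the same way their uniform counterparts are combined in the proof of Theorem \ref{3.3.6}; once the region is correctly identified the remaining computation is routine and is absorbed entirely into Theorem \ref{4.3.4}.
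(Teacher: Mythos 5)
Your proposal is correct and follows essentially the same route as the paper: sort the indices into free, constrained-by-some-$\Xi_{r_j}$, and the $r_j$'s themselves, integrate in that order to produce the factor $\left(\tfrac{\pi}{4}\right)^{k-n-\sum\alpha_j}$ and the product of $\bigl(\cot^{-1}(\xi_{r_j})\bigr)^{\alpha_j}$ terms, and then invoke Theorem \ref{4.3.4}. Note that your nested integral bounds $\int_1^{\xi_{r_{j-1}}}$ are the correct ones; the paper's display \eqref{Prob Xi_x >1} and \eqref{Hypertope Bounds} contain a typographical slip writing $1/\xi_{r_{j-1}}$ where $\xi_{r_{j-1}}$ is meant, which your write-up implicitly repairs.
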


\begin{proof}
We have from our hypothesis the bounds
\begin{equation}\label{Hypertope Bounds}
\begin{cases}
1 \leq \Xi_{r_j} < \infty  & j=1 \\
1 \leq \Xi_{r_j} \leq \dfrac{1}{\Xi_{r_{j-1}}} & j \in [n] \setminus \lbrace 1 \rbrace. 
\end{cases}
\end{equation}
For each $j \in [n],$ there are $\alpha_j$  bounds of the form $0<\Xi_{x}<1/{\Xi_{r_j}},$ with $\alpha_j$ constructed in \eqref{exponents}, and $k-n-\sum_{j=1}^{n} \alpha_j$ bounds of the form $0<\Xi_{x}<1.$

We now set up the integral for \eqref{Hypertope Probability}, whose integrand is $f_{\Xi_1}(\xi_1) \dots f_{\Xi_k}(\xi_k).$ We integrate first with respect to all $\xi_x$ with bounds of the form $0<\Xi_{x}<1.$ Then, we integrate with respect to all $\xi_x$ with $0<\Xi_{x}<1/{\Xi_{r_j}}$ for some $j \in [n].$ Lastly, we integrate with respect to $\xi_{r_j}, \dots, \xi_{r_1}$ in that order with the bounds of the form in \eqref{Hypertope Bounds}.

Integrating with respect to the first two groups of variables, \eqref{Hypertope Probability} becomes  
\begin{equation} \label{Prob Xi_x >1}
\frac{2^k}{\pi^k} \int_{1}^{\infty} \int_{1}^{\frac{1}{\xi_{r_1}}} \dots \int_{1}^{\frac{1}{\xi_{r_{n-1}}}} \left(\frac{\pi}{4}\right)^{k-n-\sum_{j=1}^{n} \alpha_j}\phi(\xi_{r_1}, \dots, \xi_{r_n}) \ d\xi_{r_n} \dots \ d\xi_{r_1},
\end{equation}
where 
\[\phi(\xi_{r_1}, \dots, \xi_{r_n})= \prod_{i=1}^{n}  \frac{\left(\cot^{-1}(\xi_i) \right)^{\alpha_i}}{\xi^2_i+1}.\]
Recognizing $\phi$ is a function of the form \eqref{hyperpolynomial}, we use Theorem \ref{4.3.4} to simplify \eqref{Prob Xi_x >1} to 
\[\left( \frac{1}{2}\right) ^{k} \prod_{i=1}^{n} \frac{1}{i+\sum_{j=1}^{i} \alpha_j }. \]

\end{proof}
Therefore,
\[S(k)=\frac{\pi^k}{2^k}\text{Vol}(\Delta^k)=\frac{\pi^k}{2^k} \int_{\mathbb{H}^k}f_{\Xi_1}(\xi_1) \dots f_{\Xi_k}(\xi_k) \ d\xi_1 \dots \ d\xi_k, \] and the capstone results from the previous section are immediate consequences. 

As promised, we have shown $S(k)$ and $\zeta(2k)$ are the consequences of two equal probabilities: the probability $k$ uniform random variables on $(0,1)$ have cyclically pairwise consecutive sums less than $1,$ and the probability $k$ independent, nonnegative Cauchy random variables have cyclically pairwise consecutive products less than $1.$

\end{document}